\documentclass[12pt, reqno]{amsart}
\usepackage{amsmath}

\usepackage{amsfonts}
\usepackage{amssymb}
\usepackage{comment}
\usepackage{graphicx}
\usepackage{xcolor}
\usepackage{enumitem}
\usepackage[margin=1in,footskip=0.25in]{geometry}

\providecommand{\U}[1]{\protect\rule{.1in}{.1in}}
\newtheorem{theorem}{Theorem}

\newtheorem{corollary}[theorem]{Corollary}

\newtheorem{definition}[theorem]{Definition}

\newtheorem{lemma}[theorem]{Lemma}

\newtheorem{proposition}[theorem]{Proposition}
\newtheorem{remark}[theorem]{Remark}

\begin{document}
{
\title []{Gabor Orthonormal Bases with Maximal Localization and Gabor Frame Operator on Local Fields}


\author{Kumar Abhinav and Qaiser Jahan}
\address{School of Mathematical and Statistical Sciences, IIT Mandi,
India}

\email{d19021@students.iitmandi.ac.in}

\address{School of Mathematical and Statistical Sciences, IIT Mandi,
India}

\email{qaiser@iitmandi.ac.in}




\begin{abstract}
We provide an explicit construction of a Gabor orthonormal bases for a local field $K$ that provides maximal localization in both time and frequency. Such a localization is not true in case of $\mathbb{R}$ due to the uncertainty principle. In particular, we construct examples of functions $f \in L^2(K)$ such that the support of the ambiguity function of $f$ is of minimum measure. Moreover, we establish a quantitative uncertainty principle for local fields, which follows as a consequence of Lieb's inequalities for general locally compact abelian group. In addition, we develop fundamental operator representations for Gabor systems defined over local fields.

\end{abstract}

\keywords{Local fields, Gabor orthonormal bases, Uncertainty principle, Gabor frame operator}
\maketitle


}

\section{Introduction}\label{sec1}

Gabor analysis studies a function (or distribution) by representing it in terms of time and frequency shifts of a window function, often called the Gabor window or Gabor atom. This provides a way to obtain information about the time and frequency of a signal simultaneously. Traditionally, time and frequency variables are taken as real numbers, but recently time-frequency analysis has also been studied for finite abelian groups \cite{feichtinger2009gabor}, locally compact abelian groups \cite{Fk, FZ, grochenig1998aspects, jakob2, jakob} and some other settings \cite{enstad2019time, fuhr2009painless}.\par
Although Gabor theory for a general LCA group is already known, we here present some main aspects of Gabor analysis for a local field. Local fields are totally disconnected locally compact fields. Since they contain a compact open subgroup, results such as the uncertainty principle and the Balian-Low phenomenon for a local field are fundamentally different from the case of $\mathbb{R}$. The most well-known example of local fields is the field of $p$-adic numbers, which is the non-Archimedean analogue of real numbers. $p$-adic models have found their applications in genetics, neural networks, geophysics, Brownian motion, and many other areas; see \cite{albeverio1999p, bikulov1997p, bilgin2024p, dragovich2009p, oleschko2017applications, zambrano2023p}. An important motivation for Gabor analysis on local fields comes from $p$-adic quantum mechanics. In standard quantum mechanics, coherent states were introduced to describe a quantum state in terms of minimal uncertainty states and further studied by Weyl \cite{weyl1927quantenmechanik} in relation to the Weyl-Heisenberg group. In \cite{vladimirov1994p, zelenov2023coherent}, $p$-adic coherent states arising from the Heisenberg group are studied over $\mathbb{Q}_p$ and entropic uncertainty relations are obtained. We establish Walnut, Wexler--Raz, and Janssen representations for Gabor systems $\{M_\gamma T_\lambda g\}$ over local fields, exploiting compact open subgroups $\mathfrak{P}^k$ and their annihilators. To our knowledge, such formulas have not been explicitly recorded in the non-Archimedean setting.
  \par

Another area which has intimate connections with time-frequency analysis is the theory of pseudodifferential operators. In \cite{grochenig2007pseudodifferential}, pseudodifferential operators are investigated on a general locally compact abelian group using time-frequency tools instead of ``hard analysis". Ultrametric pseudodifferential operators have been studied by many authors; see \cite{galeano2012pseudo, kozyrev2005pseudodifferential, torresblanca2018non}.

In recent years, wavelet theory on local field has been researched extensively. Kozyrev introduced $p$-adic wavelets \cite{kozyrev2002wavelet}  in connection with the spectral analysis of the Vladimirov pseudodifferential operator. Later many authors studied wavelet bases and multiresolution analysis on a local field; see \cite{behera2015characterization, benedetto2004wavelet, jiang2004multiresolution, shelkovich2009p}. There has been some work on Gabor frames for local field \cite{ahmad2018gabor, li2007basic} where necessary and sufficient criteria is provided for a Gabor system to be a Gabor frame. In this article, we provide a Gabor orthonormal bases for the function $f \in L^2 (K)$ with the window function maximally localized in the sense of a weak uncertainty principle, where $K$ is a local field. Support of the ambiguity function of such functions is a maximal compact open isotropic subgroup which provides the minimum uncertainty phase space cells in the group $K \times \hat{K}$. These cells in the case of $\mathbb{R}^d \times \mathbb{R}^d$ are called ``quantum blobs"; see \cite{de2009symplectic}. The existence of such maximally localized functions is in contrast to the case of $\mathbb{R}$ where good time-frequency localization is impossible to attain due to the obstacle of the Balian-Low theorem.  \par

We would like to mention some results, particularly Theorem \ref{ONB} are already known in the more general context of LCA groups (see Proposition 6.4.5 of \cite{grochenig1998aspects} and Theorem 6.1 of \cite{nicola2024maximally}) but are proved here in a more direct way in the concrete setting of local fields. Although in Section~\ref{sec6}, the results are known in the locally compact abelian group \cite{Fk, FZ, grochenig1998aspects, jakob2, jakob} but they have not been explicitly proven in the local field. The main aim of this article is to provide all these results over local fields. We would also like to emphasize that the result in Proposition~\ref{prop:14} in the last section is new even in the case of any locally compact abelian group, such an explicit calculation of Walnut's kernel does not appear elsewhere in the literature and is valuable from an applications perspective.

We have organized this article as follows. Section~\ref{sec2} recalls some basic facts about local fields. We have stated results without proofs, and references are provided. In Section~\ref{sec3}, we define the short-time Fourier transform in the local fields and prove the orthogonality relations and the inverse formula. The weak uncertainty principle and Lieb's inequalities for local fields are derived in Section~\ref{sec4}. In Section~\ref{sec5}, we expand the functions into Gabor orthonormal bases by discretizing the inversion formula. In the last section, we develop fundamental operator representations for Gabor systems over local fields. Specifically, we derive Walnut, Wexler--Raz, and Janssen formulas for such fields.

\section{Preliminaries on Local Fields}\label{sec2}

Here we state some important definitions and facts about local fields which will be used in further sections. For proofs and further details on this topic, we refer \cite{mabook,taible}.

A  non-discrete locally compact field  is called a local field. By locally compact field, we mean a field whose additive and multiplicative group both are locally compact abelian groups. Let $K$ be a local field which is connected then $K$ is either the field of real numbers $ \mathbb{R}$ or complex numbers $\mathbb{C}$. Otherwise it is totally disconnected and we have the following classification for such fields:- \par
\begin{itemize}
    \item If char$(K)=0$ then $K$ is either $p$-adic field $\mathbb{Q}_p$ or a finite algebraic extension of $\mathbb{Q}_p$.
    \item If char$(K)>0$ then $K$ is either  $p$-series field also called a formal power series field or it's finite algebraic extension. 
\end{itemize}
Here char$(K)$ means the characteristic of a field $K$. In further discussion, we mean by a local field a non-discrete locally compact field $\mathit{K}$ which is totally disconnected. \par
While the absolute value or valuation on $\mathbb{R}$ and $\mathbb{C}$ follows the usual triangle     inequality, valuation on $K$ is ultrametric i.e., for all $x,y \in K$, we have  $$|x+y| \leq \,max\,\{|x|,|y|\}.$$
We can define this valuation using the Haar measure on the LCA group $K^+$. If $dx$ is a Haar measure on $K^+$ then $d(\alpha x)$ is also a Haar measure for any non-zero $\alpha \in K$. If $d(\alpha x) = |\alpha|dx$ and set $|0| = 0$ then $|\alpha|$ defines the absolute value for all $\alpha \in K$.\par
Now we define two important subsets of a local field $K$.
\begin{itemize}
    \item $\mathfrak{D}=\{x \in \mathit{K}\,\,:\,\,|x|\leq 1\}$.
    \item $\mathfrak{P}=\{x \in \mathit{K}\,\,:\,\,|x| < 1\}.$ 
\end{itemize}

Here $\mathfrak{D}$ is a subring in $K$ which is compact and $\mathfrak{P}$ is the unique maximal ideal in $\mathfrak{D}$. Then $\mathfrak{P}/\mathfrak{D}$ is a finite field $\mathbb{F}_q$ where $q$ is a power of some prime number.

 Let $S$ be a measurable set in $K$ then we define the measure of the set $S$ denoted by $|S| = \int_{\mathit{K}} \bold{1}_S\, dx$, where $\bold{1}_S$ is the characteristic function of $S$. Haar measure $dx$ is normalized so that $|\mathfrak{D}|=1$. \par

The set of absolute values is of the form $\{q^k \,:\,k \in \mathbb{Z}\} \cup \{0\}$ for some $q > 0$ since $K$ is totally disconnected. So there is an element, say $\mathfrak{p}$ in $\mathfrak{P}$ of maximum absolute value. Then $\mathfrak{p}$ is called a prime element in $\mathit{K}$ and $\mathfrak{P} = (\mathfrak{p}) = \mathfrak{p}\mathfrak{D}$ as an ideal in $\mathfrak{D}$. It is also easy to calculate that $|\mathfrak{P}|=q^{-1}$, and $|\mathfrak{p}|=q^{-1}$.\par

Let us define $\mathfrak{P}^k:= \mathfrak{p}^k\mathfrak{D} = \{x \in \mathit{K} \,:\,|x| \leq q^{-k}\}$. $\mathfrak{P}^k$ is compact and open subgroup of $\mathit{K}^+$ for all $k \in \mathbb{Z}$. $K$ can be represented as a countable union of cosets of the subgroup $\mathfrak{P}^k$ for any $k \in \mathbb{Z}.$\par
Let $\chi$ be a character on $\mathit{K}^+$ such that $\chi$ is trivial on $\mathfrak{D}$ but is non-trivial on $\mathfrak{P}^{-1}$. Then $\chi$ is also constant on the cosets of $\mathfrak{D}$ in $\mathit{K}^+$. $\mathit{K}$ is self dual i.e., $\mathit{K}^+ \cong \hat{\mathit{K}}^+$ with the correspondence $\lambda \longleftrightarrow \chi_{\lambda}$ where $\chi_{\lambda}(x) := \chi (\lambda x)$. We now define the annihilator of the subgroup $\mathfrak{P}^k, k\in \mathbb{Z}$  as 
$$(\mathfrak{P}^k)^{\perp}= \{\chi \in \hat{K}: \,\, \chi(x) = 1 \,\, \forall \,\, x \in \mathfrak{P}^k\}.  $$
We endow the LCA group $\hat{K}$ with the non-Archimedean norm such that $(\mathfrak{P}^k)^{\perp} = \{ \chi_\lambda \in \hat{K}: \,\, |\lambda| \leq q^k \} \cong  \mathfrak{P}^{-k}$. Haar measure $d\xi$ on $\hat{K}$ is chosen such that $|(\mathfrak{P}^k)^{\perp}|= \dfrac{1}{|\mathfrak{P}^k|}= q^k, k \in \mathbb{Z}$.\\
The Haar measure on the group $K \times \hat{K}$ is given by the product measure. If $S$ is a measurable subset of $K \times \hat{K}$, then measure of $S$ is defined as
$$\mu_{K \times \hat{K}}(S) = \int_{\hat{K}}\int_K \bold{1}_S \,dxd\xi,$$
where $\bold{1}_S$ is the characteristic function of $S$.

\begin{definition}
     Fourier transform of a function $f \in L^1(\mathit{K})$ is defined as
    $$\hat{f}(\xi) = \int_{\mathit{K}} f(x) \overline{\chi_\xi}(x)\, dx = \int_{\mathit{K}} f(x) \chi(- \xi x)\, dx. $$
\end{definition}

Fourier transform is a bounded linear transformation of functions from $L^1$ into $L^{\infty}$ i.e., $\|\hat{f}\|_{\infty}\leq \|f\|_1$ and $\hat{f}$ is a uniformly continuous function. \par

For $k \in \mathbb{Z}$, let $\Phi_k$ be the characteristic function of $\mathfrak{P}^k$.
Let $\mathcal{S}(\mathit{K})$ be the space of finite linear combinations of the functions of the form $\tau_h \Phi_k, \, h \in \mathit{K}, \, k \in \mathbb{Z}$. Then $\mathcal{S}(\mathit{K})$
is called the space of test functions. Another description of a test function $\phi \in \mathcal{S}(\mathit{K})$ can be given as a function which has support on $\mathfrak{P}^l$ and is constant on the cosets of $\mathfrak{P}^k$ for some integers $k$ and $l$.\par
If $\phi \in \mathcal{S}(\mathit{K})$ is constant on cosets of $\mathfrak{P}^k$ and is supported on $\mathfrak{P}^l$ then $\hat{\phi} \in \mathcal{S}(\mathit{K})$ is constant on cosets of $\mathfrak{P}^{-l}$ and is supported on $\mathfrak{P}^{-k}$. We illustrate this by giving the Fourier transform of the characteristic function of $\mathfrak{P}^k$ which we will use in the later sections. For $k \in \mathbb{Z}$, we have
\begin{equation*}
        \hat{\Phi}_k(\xi)= \int_K \Phi_k (x) \overline{\chi}_{\xi}(x)\, dx = \int_{\mathfrak{P}^k}\overline{\chi}_{\xi}(x) \,dx .       
\end{equation*}
We can see that if $|\xi| \leq q^k$ then the above integral equals $q^{-k}$, otherwise it is $0$. Hence $\hat{\Phi}_k = q^{-k} \Phi_{-k}$.

\section{Short-Time Fourier Transform}\label{sec3}

We define the short-time Fourier transform(STFT) on local field $K$ analogous to the case of $\mathbb{R}$. Let $g \in L^2(K)$ be the window function then the short-time Fourier transform of a function $f \in L^2(K)$ is defined as
\begin{equation}\label{eq:1}
    V_g f(x,\xi)= \int_{K} f(t)\overline{g(t-x)\chi_\xi(t)} dt\,,\,\,\,\,\,\,\,\, \,\,\,(x,\xi) \in K \times \hat{K}.
\end{equation}
If we write time shift as
$$T_x f(t) = f(t-x),$$ and frequency shift as
$$M_{\xi}f(t)=\chi_\xi (t) f(t),$$
for $x,t \in K$ and $\xi \in \hat{K}$, then we can rewrite STFT as
$$V_g f(x, \xi) = \langle f, M_\xi T_x g \rangle = (f.T_x\Bar{g}\hat{)}(\xi).$$

Using the above inner-product version,  we can extend the transform to situations where the integral in (\ref{eq:1}) is not defined. For example, $\langle f, M_\xi T_x g \rangle$ is defined when $f \in \mathcal{S}(K)$ and $g \in \mathcal{S}'(K)$. Also note that the function $V_g (f)$ is continuous since $T_x$ and $M_\xi$ are strongly continuous, unitary operators on $L^2 (K)$.

\begin{theorem}
    
   Let $f_i, g_i \in L^2(K)$ for $i=1,2$. Then 
   \begin{equation}\label{eq:2}
       \langle V_{g_1}f_1, V_{g_2} f_2\rangle_{L^2(K \times \hat{K})} = \langle f_1 , f_2 \rangle \overline{\langle {g_1, g_2} \rangle}.
   \end{equation}
\end{theorem}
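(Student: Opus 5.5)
We will prove the orthogonality relations by the standard route: write the STFT as a Fourier transform in the frequency variable, apply Plancherel on $K$ for each fixed $x$, and then integrate in $x$ using Fubini.

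\textbf{Step 1 (dense subclass).} First assume $f_i,g_i\in L^2(K)\cap L^\infty(K)$; for instance take them in $\mathcal{S}(K)$, or simply bounded. Then for each fixed $x$ the product $f_i\,T_x\bar g_i$ lies in $L^1(K)\cap L^2(K)$ by Cauchy--Schwarz and boundedness. We have $V_{g_i}f_i(x,\xi)=(f_i\,T_x\bar g_i)\hat{\ }(\xi)$, as noted above.

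\textbf{Step 2 (Plancherel in $\xi$).} The Haar measure on $\hat K$ is normalized by $|(\mathfrak{P}^k)^\perp|=1/|\mathfrak{P}^k|$. This is exactly the normalization making the Fourier transform unitary; one can check this with $\hat\Phi_k=q^{-k}\Phi_{-k}$, since $\|\hat\Phi_k\|_2^2=q^{-2k}q^k=q^{-k}=\|\Phi_k\|_2^2$. So Plancherel holds with constant $1$, and for each fixed $x$
\[
\int_{\hat K} V_{g_1}f_1(x,\xi)\overline{V_{g_2}f_2(x,\xi)}\,d\xi=\int_K f_1(t)\overline{f_2(t)}\,\overline{g_1(t-x)}g_2(t-x)\,dt .
\]

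\textbf{Step 3 (integrate in $x$).} Integrate the identity of Step 2 over $x\in K$. The function $(t,x)\mapsto f_1\bar f_2(t)\,\bar g_1g_2(t-x)$ is in $L^1(K\times K)$, because its $L^1$ norm is bounded by $\|f_1\bar f_2\|_1\|g_1\bar g_2\|_1$ by translation invariance of Haar measure. Fubini therefore applies, and the right-hand side becomes $\langle f_1,f_2\rangle\overline{\langle g_1,g_2\rangle}$. The same computation with $f_1=f_2$ and $g_1=g_2$ gives $\|V_gf\|_{L^2(K\times\hat K)}=\|f\|_2\|g\|_2$. This also justifies the use of Fubini on the left-hand side, since $V_{g_i}f_i\in L^2(K\times \hat K)$.

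\textbf{Step 4 (extension).} The map $(f,g)\mapsto V_gf$ is sesquilinear and pointwise continuous. By Step 3 it satisfies $\|V_gf\|_2=\|f\|_2\|g\|_2$ on a dense subspace, so it extends by density to all of $L^2(K)\times L^2(K)$ as a bounded map into $L^2(K\times\hat K)$. The extension agrees with the pointwise definition $\langle f,M_\xi T_xg\rangle$: if $f_n\to f$ and $g_n\to g$, then $V_{g_n}f_n\to V_gf$ pointwise, while a subsequence converges almost everywhere to the $L^2$ limit. Polarization, or direct continuity of both sides of \eqref{eq:2}, then gives \eqref{eq:2} in general.

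The main obstacle is Step 4 together with the measure normalization in Step 2. The measure-theoretic justification (Fubini and identifying the $L^2$ limit with the pointwise STFT) has to be handled carefully. Plancherel with constant exactly $1$ relies on the stated choice of $d\xi$, and I will verify it via the $\Phi_k$ computation.
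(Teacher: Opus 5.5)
Your proposal is correct and takes essentially the same approach as the paper. Both apply Plancherel in $\xi$ for fixed $x$, then Fubini in $x$ using $f_1\overline{f_2},\ \overline{g_1}g_2\in L^1(K)$, and then extend by density. Your version is somewhat more careful than the paper's in three places: you verify the Plancherel normalization via $\hat{\Phi}_k=q^{-k}\Phi_{-k}$, you justify Fubini on the left-hand side through the norm identity, and you identify the $L^2$-limit with the pointwise STFT via an a.e.\ convergent subsequence. The paper's extension step, which restricts only the windows to $L^1\cap L^\infty$, instead tacitly presupposes $V_{g_2}f_2\in L^2(K\times\hat{K})$ for arbitrary $g_2$.
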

\begin{proof}
    Proof follows similar to the case of $\mathbb{R}$. We prove the relation $(\ref{eq:2})$ for $g_j \in L^1 \cap L^{\infty} \subset L^2$. General case follows from the standard density argument:
\begin{equation*}
    \begin{split}
         \langle V_{g_1}f_1, V_{g_2} f_2\rangle &= \int_{K} \int_{\hat{K}} V_{g_1} f_1 (x,\xi) \overline{V_{g_2}f_2 (x, \xi)} d\xi dx   \\
         &= \int_{K} \left( \int_{\hat{K}} (f_1. T_x\Bar{g}_1\hat{)}(\xi)\overline{(f_2.T_x \Bar{g}_2 \hat{)}}(\xi) d\xi  \right) dx\\
          &=  \int_{K} \left( \int_{\hat{K}} f_1\Bar{f_2}(y).\Bar{g_1}g_2(y-x) dy  \right) dx \,\,\,\,\,\,\,\,\,\text{(Using Parseval's formula)}.
    \end{split}
\end{equation*}
By Holder's inequality, we have $f_1 \Bar{f_2}, \Bar{g_1}g_2 \in L^1(K)$ and $L^1(K)$ is closed under convolution. So we change the order of integration by Fubini's theorem and get 
\begin{equation*}
    \begin{split}
         \langle V_{g_1}f_1, V_{g_2} f_2\rangle&= \int_{K} f_1(y) \overline{f_2(y)}\left( \int_{\hat{K}} \overline{g_1(y-x)}g_2(y-x) dx  \right) dy\\
         &= \langle f_1, f_2 \rangle \overline{\langle g_1, g_2 \rangle}.       
    \end{split}
\end{equation*}

For the general case $g_j \in L^2(K)$, we see that for any $f_1, f_2 \in L^2(K)$ and $g_1 \in L^1 \cap L^{\infty}(K) $, the map $g_2 \mapsto \langle V_{g_1}f_1, V_{g_2}f_2 \rangle$ is linear functional on $L^2(K)$. This linear functional coincides with $\langle f_1, f_2 \rangle \overline{\langle g_1, g_2 \rangle}$ on $L^1 \cap L^\infty (K)$, a dense subspace of $L^2(K)$. So, the later is bounded and can be extended for all $g_2 \in L^2(K)$. Same procedure can be repeated for $g_2$ and the relation is established. 
    
\end{proof}

\begin{corollary}\label{cor:3}
    If $f,g \in L^2(K)$ then $$\| V_gf\|_{L^2}=\|f\|_2\|g\|_2.$$
\end{corollary}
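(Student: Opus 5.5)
This is an immediate specialization of the orthogonality relation (\ref{eq:2}) in the preceding theorem. We take $f_1=f_2=f$ and $g_1=g_2=g$, where $f,g\in L^2(K)$ are arbitrary. The theorem applies directly, since it is stated for all $f_i,g_i\in L^2(K)$. It gives
\begin{equation*}
\|V_gf\|_{L^2(K\times\hat K)}^2=\langle V_gf,V_gf\rangle_{L^2(K\times\hat K)}=\langle f,f\rangle\,\overline{\langle g,g\rangle}=\|f\|_2^2\,\|g\|_2^2 .
\end{equation*}
Here we use that $\langle g,g\rangle=\|g\|_2^2$ is real and nonnegative, so the complex conjugation in (\ref{eq:2}) has no effect.

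Taking nonnegative square roots of both sides yields $\|V_gf\|_{L^2}=\|f\|_2\|g\|_2$. As a by-product, $V_gf\in L^2(K\times\hat K)$, so the left-hand norm is finite and the identity is meaningful. This is implicit in the theorem, whose left side is an $L^2(K\times\hat K)$ inner product. The norm on $K\times\hat K$ is taken with respect to the product Haar measure $dx\,d\xi$ fixed in Section~\ref{sec2}. This is the same measure used in the theorem, so no renormalization constants arise.

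There is no real obstacle in this argument. The only point needing care is the general case of $L^2$ windows rather than $L^1\cap L^\infty$ windows, and the theorem has already handled it by its density argument. The corollary therefore follows without further approximation.
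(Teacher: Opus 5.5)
Your proof is correct and is exactly the paper's intended route. The paper gives no separate proof; the corollary is just the special case $f_1=f_2=f$, $g_1=g_2=g$ of the orthogonality relation (\ref{eq:2}), followed by taking square roots.
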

So if $\|g\|_2 = 1$ then STFT is an isometry from  $L^2(K)$ to $L^2(K \times \hat{K})$. Another important application of orthogonality relation is inverse formula which we present in the next corollary. 

\begin{corollary}\label{cor:4}
    Let $g \in L^2(K)$ such that $\|g\|_2 =1$, then for all $f \in L^2(K)$,
    \begin{equation}
        f = \int_K \int_{\hat{K}} V_g f (x, \xi) M_\xi T_x g\, d\xi dx,
    \end{equation}
    where the above integral is to be understood in the weak sense.
\end{corollary}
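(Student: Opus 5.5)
The plan is to interpret the vector-valued integral weakly and reduce the claim to the orthogonality relation (\ref{eq:2}). For $f \in L^2(K)$ we define
$$\tilde f := \int_K \int_{\hat K} V_g f(x,\xi)\, M_\xi T_x g \, d\xi\, dx$$
as the element of $L^2(K)$ determined by its pairings with arbitrary $h \in L^2(K)$:
$$\langle \tilde f, h\rangle := \int_K \int_{\hat K} V_g f(x,\xi)\, \langle M_\xi T_x g, h\rangle \, d\xi\, dx .$$
We must first check that this defines a bounded conjugate-linear functional in $h$, so that $\tilde f$ exists by the Riesz representation theorem. Once that is done, it is enough to show $\langle \tilde f, h\rangle = \langle f, h\rangle$ for every $h$.

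For the well-definedness, we use $\langle M_\xi T_x g, h\rangle = \overline{\langle h, M_\xi T_x g\rangle} = \overline{V_g h(x,\xi)}$. The integrand is then $V_g f \cdot \overline{V_g h}$. By Corollary \ref{cor:3}, both $V_g f$ and $V_g h$ lie in $L^2(K\times\hat K)$, with norms $\|f\|_2$ and $\|h\|_2$ since $\|g\|_2 = 1$. Cauchy--Schwarz then shows the integrand is absolutely integrable, with
$$|\langle \tilde f, h\rangle| \le \|f\|_2\,\|h\|_2 .$$
So the functional is bounded, $\tilde f$ exists, and $\|\tilde f\|_2 \le \|f\|_2$.

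For the identity itself, the double integral is exactly $\langle V_g f, V_g h\rangle_{L^2(K\times\hat K)}$. Applying the orthogonality relation (\ref{eq:2}) with $f_1 = f$, $f_2 = h$ and $g_1 = g_2 = g$ gives
$$\langle V_g f, V_g h\rangle = \langle f, h\rangle\, \overline{\langle g, g\rangle} = \langle f, h\rangle \|g\|_2^2 = \langle f, h\rangle .$$
Hence $\langle \tilde f - f, h\rangle = 0$ for all $h \in L^2(K)$, and so $\tilde f = f$.

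There is no real obstacle here. The only point needing care is to make the weak interpretation precise: the integrand is not Bochner integrable in general, so the integral must be defined through pairings with every $h \in L^2(K)$. The absolute integrability needed to define those pairings is supplied by Corollary \ref{cor:3}. If desired, one can remark that for $\|g\|_2 \neq 0$ not normalized, the same argument gives the formula with the factor $\|g\|_2^{-2}$.
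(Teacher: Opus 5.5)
Your proposal is correct and follows essentially the same route as the paper. Both define the integral weakly through pairings with $h\in L^2(K)$, identify the pairing with $\langle V_g f, V_g h\rangle$, and apply the orthogonality relation with $\|g\|_2=1$. Your explicit Cauchy--Schwarz and Riesz argument just makes precise the well-definedness step that the paper attributes directly to the isometry corollary.
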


\begin{proof}
    Let $f_1$ be a vector valued integral defined as
    $$f_1 = \int_K \int_{\hat{K}} V_g f (x, \xi) M_\xi T_x g\, d\xi dx.$$

    By Corollary~\ref{cor:3}, $V_g(f) \in L^2(K \times \hat{K})$. So $f_1$ is well defined in $L^2(K)$. For any $h \in L^2(K)$, then using orthogonality relations
    \begin{equation*}
        \begin{split}
            \langle f_1, h \rangle &= \int_K \int_{\hat{K}} V_g f (x, \xi) \overline{\langle h, M_\xi T_x g \rangle} \, d\xi dx \\
            &= \langle V_g f, V_g h \rangle \\
            &= \langle f, h \rangle \overline{\langle g, g \rangle}\\
            &= \langle f, h \rangle.
          \end{split}
    \end{equation*}
\end{proof}

\begin{remark}
The inversion formula above is, in general, valid only in the weak $L^2(K)$ sense. Under stronger assumptions on the window function (for instance, $g\in S_0(K)$ or suitable Wiener-type spaces), one can obtain stronger forms of reconstruction. This phenomenon is well known in the classical Euclidean setting; see, for example, \cite{W}.
\end{remark}

\section{Uncertainty Principle}\label{sec4}

According to the classical uncertainty principle, for a non-zero function on $\mathbb{R}$, simultaneous localization of time and frequency is not possible to an arbitrary precision. Mathematically speaking, $f$ and $\hat{f}$ both can not have compact support for any non-zero function $ f \in L^2(\mathbb{R})$. But we have seen that for $f \in \mathcal{S}(K) \subset L^2(K)$, both $f$ and $\hat{f}$ are compactly supported. Hence, such qualitative uncertainty principle  is not possible for functions on local field. \\
We present here a quantitative uncertainty principle for local fields which can also be deduced from Lieb's inequalities \cite{lieb1990integral} for any LCA group. We first state Lieb's inequalities in the next theorem, proof of which can be found in \cite{feichtinger2012gabor}.\par
For any LCA group $\mathcal{G}$,
$$\mathcal{G}  \cong \mathbb{R}^d \times \mathcal{G}_0$$
by the structure theorem where $d \in \mathbb{N} \cup \{0\}$ and $\mathcal{G}_0$ is some LCA group which contains a compact open subgroup. 
\begin{theorem}
    For $f,g \in L^2(\mathcal{G})$, we have
    \begin{equation*}
       \begin{split}
            \|V_g f\|_{L^p (\mathcal{G}\times \hat{\mathcal{G}})} &\leq \left ( \dfrac{2}{p} \right)^{\frac{d}{p}} \|f\|_{L^2(\mathcal{G})}\|g\|_{L^2(\mathcal{G})}\,\,\,\,\,\,\,\,\,\,\,\,\text{if}\,\, \,2 \leq p < \infty\\
          \|V_g f\|_{L^p (\mathcal{G}\times \hat{\mathcal{G}})} &\geq \left ( \dfrac{2}{p} \right)^{\frac{d}{p}} \|f\|_{L^2(\mathcal{G})}\|g\|_{L^2(\mathcal{G})}\,\,\,\,\,\,\,\,\,\,\,\,\text{if}\,\,\, 1 \leq p \leq 2.
       \end{split}
    \end{equation*}
\end{theorem}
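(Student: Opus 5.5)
The plan is to reduce to the two factors of the structure decomposition $\mathcal{G}\cong\mathbb{R}^d\times\mathcal{G}_0$. On $\mathbb{R}^d$ we use Lieb's sharp inequality. On $\mathcal{G}_0$ we prove a constant-$1$ estimate directly, using $\|V_gf\|_\infty\le\|f\|_2\|g\|_2$ together with the orthogonality relations (Corollary~\ref{cor:3} in its general LCA form). We then glue the two pieces with Minkowski's integral inequality.

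\textbf{Step 1: the totally disconnected factor $\mathcal{G}_0$.} For $p\ge 2$, Cauchy--Schwarz gives $|V_gf|\le \|f\|_2\|g\|_2$ pointwise. Hence
\[
\int|V_gf|^p\le \big(\|f\|_2\|g\|_2\big)^{p-2}\int|V_gf|^2=\big(\|f\|_2\|g\|_2\big)^{p}.
\]
For $1\le p<2$ the same pointwise bound gives
\[
\|f\|_2^2\|g\|_2^2=\int|V_gf|^{2}\le(\|f\|_2\|g\|_2)^{2-p}\int|V_gf|^p,
\]
which yields the reverse inequality with constant $1=(2/p)^{0}$. So the theorem holds with $d=0$. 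This covers local fields, and it is the part the paper actually needs.

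\textbf{Step 2: the factor $\mathbb{R}^d$.} Here we invoke Lieb's theorem \cite{lieb1990integral}: for $f,g\in L^2(\mathbb{R}^d)$,
\[
\|V_gf\|_p\le (2/p)^{d/p}\|f\|_2\|g\|_2 \quad (p\ge 2),
\]
with the reverse inequality for $p\le 2$. We take this as the known Euclidean input, following \cite{feichtinger2012gabor}.

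\textbf{Step 3: tensor product and general functions.} For $f=f_1\otimes f_0$ and $g=g_1\otimes g_0$, the STFT factorizes: $V_gf=V_{g_1}f_1\otimes V_{g_0}f_0$. The bounds from Steps 1 and 2 therefore multiply directly. For general $f,g$ this is the main obstacle, since the $L^p$ norm is not additive over sums of tensors.

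To handle it, write points as $(x_1,x_0,\xi_1,\xi_0)$ and use $p\ge2$. For fixed $(x_0,\xi_0)$, the function $(x_1,\xi_1)\mapsto V_gf$ can be written as an STFT on $\mathbb{R}^d$ of vector-valued ($L^2(\mathcal{G}_0)$-valued) data. Concretely, it is $\langle F(x_0,\xi_0)(\cdot),\,M_{\xi_1}T_{x_1}g(\cdot)\rangle$, where $F$ involves $f\cdot\overline{T_{x_0}g}$ Fourier-transformed in the $\mathcal{G}_0$ variable.

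Lieb's inequality then has to be applied in a mixed-norm fashion. The cleaner route I would actually try is the one in \cite{feichtinger2012gabor}: Lieb's original proof rests on sharp Hausdorff--Young and Young inequalities. Their sharp constants on $\mathcal{G}$ are known to equal the $\mathbb{R}^d$ constants, since the constant on groups with a compact open subgroup is $1$; this is the Beckner/Brascamp--Lieb constants transferred through the structure theorem. We then rerun Lieb's argument verbatim on $\mathcal{G}$, starting from
\[
V_gf(x,\xi)=\big(f\cdot T_x\bar g\big)^{\wedge}(\xi).
\]
Hausdorff--Young in $\xi$ (exponent $p'$) gives $\|V_gf(x,\cdot)\|_p\le C_{p}^{d}\,\|f\,T_x\bar g\|_{p'}$. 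Integrating the $p$-th power in $x$ leaves a convolution $|f|^{p'}*|\tilde g|^{p'}$ in $L^{p/p'}$, which sharp Young bounds by $\|f\|_2^{p}\|g\|_2^{p}$ times constants. Collecting constants gives $(2/p)^{d/p}$, exactly as in Lieb.

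The case $p<2$ follows by the interpolation/duality trick used in Step 1. Writing $2=\theta p+(1-\theta)r$ for some $r>2$ and applying Hölder gives
\[
\|f\|_2^2\|g\|_2^2=\|V_gf\|_2^2\le\|V_gf\|_p^{\theta p}\,\|V_gf\|_r^{(1-\theta)r}.
\]
Combining this with the upper bound at $r$ and letting $r\to\infty$ gives the lower bound.
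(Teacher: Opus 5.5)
Your Step~1 is exactly the argument the paper gives for the local-field case $d=0$, in the theorem right after this statement. The general statement itself is not proved in the paper; it is quoted from \cite{feichtinger2012gabor}. For $d\ge 1$, however, your lower bound for $1\le p<2$ does not go through. With $\theta=(r-2)/(r-p)$, your H\"older step together with the upper bound at $r$ gives only
\[
\|V_gf\|_p\ \ge\ \Bigl(\frac r2\Bigr)^{\frac{d(2-p)}{p(r-2)}}\,\|f\|_2\|g\|_2 .
\]
As $r\to\infty$ this tends to $\|f\|_2\|g\|_2$, i.e.\ constant $1$ rather than $(2/p)^{d/p}>1$. Optimizing over $r>2$ does not help. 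The exponent is largest as $r\to2^+$, giving $e^{d(2-p)/(2p)}$, and this is strictly less than $(2/p)^{d/p}$ because $\log(2/p)>1-p/2$ for $p<2$; for $p=d=1$ it is $e^{1/2}$ versus $2$. The obstruction is structural, not a matter of bookkeeping. Interpolating with H\"older against the upper bounds can only produce the tangent line $\frac d2(2-p)$ at $p=2$ to the convex curve $d\log(2/p)$, which is the value of $\log\|V_gf\|_p^p$ for normalized Gaussians; it never reaches the curve itself. The trick worked in Step~1 only because the target constant there is $1$.

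The missing ingredient is a genuinely reverse inequality. For $p\le2$, apply Hausdorff--Young to the \emph{inverse} transform in $\xi$: $\|f\,T_x\bar g\|_{p'}\le A_p^d\,\|(f\,T_x\bar g)^\wedge\|_p$, where $A_p^d$ is the Babenko--Beckner constant. For $1<p<2$ this yields
\[
\|V_gf\|_p^p\ \ge\ A_p^{-dp}\int_{\mathcal G}\bigl(|f|^{p'}*|g^*|^{p'}\bigr)(x)^{\,p-1}\,dx .
\]
The right-hand side is a convolution of two functions in $L^{2/p'}$, measured in $L^{p-1}$, with all exponents at most $1$. It must be bounded below by the sharp \emph{reverse} Young inequality of Leindler and Brascamp--Lieb, with its constant transferred to $\mathbb R^d\times\mathcal G_0$. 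At the endpoint $p=1$ one gets instead $\|V_gf\|_1\ge\int\|f\,T_x\bar g\|_\infty\,dx$, and the relevant inequality is Pr\'ekopa--Leindler. Both steps are sharp on Gaussians, which is where $(2/p)^{d/p}$ comes from.

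A smaller point concerns $p\ge2$: you assert without argument that the sharp Hausdorff--Young and Young constants on $\mathcal G$ equal the Euclidean ones. What you actually need is that they are at most the Euclidean ones, and this is true and easy. Apply each inequality one factor at a time and interchange the mixed norms with Minkowski's integral inequality, which is legitimate because $p\ge p'$ and $p-1\ge1$. On $\mathcal G_0$ use constant $1$, from Riesz--Thorin and the standard proof of Young's inequality.
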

 A simple proof is provided in \cite{nicola2024maximally} for a weaker version of these inequalities. But these versions are in fact sharp for a local field since $d=0$. Furthermore, the case for $0 <p <1$ is also covered. We include the proof as the next theorem.

\begin{theorem}
  Let $f,g \in L^2(K)$ be non-zero functions then the following inequalities hold true 
  \begin{equation}\label{eq:4}
       \begin{split}
            \|V_g f\|_p &\leq \|f\|_2\|g\|_2\,\,\,\,\,\,\,\,\,\,\,\,\text{if}\,\, \,2 \leq p < \infty\\
          \|V_g f\|_p &\geq  \|f\|_2\|g\|_2\,\,\,\,\,\,\,\,\,\,\,\,\text{if}\,\,\, 0 < p \leq 2.
       \end{split}
    \end{equation}

\end{theorem}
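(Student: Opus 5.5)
The plan is to combine two facts: the $L^2$ identity of Corollary~\ref{cor:3}, and a pointwise bound $\|V_g f\|_\infty \le \|f\|_2\|g\|_2$. Both inequalities then follow by comparing $L^p$ norms against the $L^2$ norm, using the pointwise bound to control the ratio $|V_gf|/(\|f\|_2\|g\|_2)$. No Hausdorff--Young or Babenko--Beckner type estimate is needed, because on a local field the sharp constant is simply $1$.

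First, I would normalize by homogeneity. Replacing $f$ by $f/\|f\|_2$ and $g$ by $g/\|g\|_2$ scales $V_gf$ by $(\|f\|_2\|g\|_2)^{-1}$, so it suffices to treat $\|f\|_2=\|g\|_2=1$. Then Cauchy--Schwarz, applied to $V_gf(x,\xi)=\langle f, M_\xi T_x g\rangle$ together with the unitarity of $M_\xi$ and $T_x$, gives $|V_gf(x,\xi)|\le 1$ for every $(x,\xi)\in K\times\hat K$. Corollary~\ref{cor:3} gives $\int_{K\times\hat K}|V_gf|^2\,dx\,d\xi = 1$.

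Next, I would compare powers pointwise. Since $0\le |V_gf|\le 1$:
\begin{itemize}
\item for $p\ge 2$, we have $|V_gf|^p\le |V_gf|^2$ pointwise;
\item for $0<p\le 2$, we have $|V_gf|^p\ge |V_gf|^2$ pointwise.
\end{itemize}
Integrating over $K\times\hat K$ gives $\|V_gf\|_p^p\le 1$ in the first case and $\|V_gf\|_p^p\ge 1$ in the second, where the latter may be $+\infty$. Taking $p$-th roots, which preserve order for every $p>0$, yields $\|V_gf\|_p\le 1$ and $\|V_gf\|_p\ge 1$ respectively. Undoing the normalization gives the two inequalities of the theorem.

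I do not expect a genuine obstacle. The main points needing care are as follows. For $0<p<1$, $\|\cdot\|_p$ is only a quasi-norm, but the argument never uses the triangle inequality, so this causes no problem. The value $\|V_gf\|_p=\infty$ is allowed in the second inequality. Measurability of $V_gf$ is ensured by its continuity, noted in Section~\ref{sec3}.

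It is worth remarking why this simple argument cannot be sharp on $\mathbb{R}$: there, equality $|V_gf|\equiv 1$ on the support cannot occur. On $K$, by contrast, equality is attained when $|V_gf|$ is the indicator of a set of measure $1$, for instance with $f=g=\Phi_0$. That example previews the maximally localized windows treated later in the paper.
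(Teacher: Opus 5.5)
Your proof is correct and is essentially the paper's argument. The paper writes $|V_gf|^p=|V_gf|^{p-2}|V_gf|^2$ (respectively $|V_gf|^2=|V_gf|^{2-p}|V_gf|^p$), bounds the first factor by the Cauchy--Schwarz estimate $\|V_gf\|_\infty\le\|f\|_2\|g\|_2$, and integrates using Corollary~\ref{cor:3}; this is your pointwise comparison of $t^p$ with $t^2$ on $[0,1]$ after normalization, except that you state the sup bound explicitly where the paper only uses it implicitly.
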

\begin{proof}
 For $p= 2$ the result follows directly by corollary (\ref{cor:3}). Let $2 < p < \infty$, then

 \begin{equation*}
     \begin{split}
         \|V_g f\|_p^p &= \int_{K \times \hat{K}} |V_g f(x, \xi)|^p \,dxd\xi = \int_{K \times \hat{K}} |V_g f(x, \xi)|^{p-2} |V_g f(x, \xi)|^{2}\,dxd\xi \\
         & \leq \|f\|_2^{p-2} \|g\|_2^{p-2}  \int_{K \times \hat{K}}  |V_g f(x, \xi)|^{2}\,dxd\xi \\
         &= \|f\|_2^p \|g\|_2^p.         
         \end{split}
 \end{equation*}

 To prove the second inequality analogously, Let $0 < p \leq 2$, then
 \begin{equation*}
     \begin{split}
         \|f\|_2^2 \|g\|_2^2   &= \int_{K \times \hat{K}} |V_g f(x, \xi)|^2 \,dxd\xi = \int_{K \times \hat{K}} |V_g f(x, \xi)|^{2-p} |V_g f(x, \xi)|^{p}\,dxd\xi \\
         & \leq \|f\|_2^{2-p} \|g\|_2^{2-p}  \int_{K \times \hat{K}}  |V_g f(x, \xi)|^{p}\,dxd\xi.   
         \end{split}
 \end{equation*}

 This implies that $$\|f\|_2^p \|g\|_2^p  \leq \|V_g f\|_p^p.$$ 
\end{proof}
  Estimates in the above theorem are sharp. In fact, if $|V_g f| = \|f\|_2\|g\|_2 \boldsymbol{1}_S $ where $S$ is the support of the function $V_g f$ and measure of the set $S$ i.e., $\mu_{K \times \hat{K}}(S) =1$, then equality occurs in the estimates (\ref{eq:4}). Above all, one can prove that if  $\mu_{K \times \hat{K}}(S) =1$ then $S$ is compact and open. We show this along with a weak uncertainty principle in the following. 

\begin{theorem}\label{thm:7}
    Let $f,g \in L^2(K)$ be non-zero functions. Let $S$ be the support of the function $V_gf$ then $\mu_{K \times \hat{K}}(S) \geq 1$. If $\mu_{K \times \hat{K}}(S) = 1$ then $|V_g f| = \|f\|_2\|g\|_2 \boldsymbol{1}_S$ and $S$ is compact and open.
\end{theorem}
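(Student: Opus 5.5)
The plan is to compare the pointwise bound on $V_g f$ with the $L^2$-norm identity of Corollary~\ref{cor:3}. By Cauchy--Schwarz and unitarity of $T_x$ and $M_\xi$,
\[
|V_g f(x,\xi)| = |\langle f, M_\xi T_x g\rangle| \le \|f\|_2\|g\|_2 \quad \text{for all } (x,\xi).
\]
Hence
\[
\|f\|_2^2\|g\|_2^2 = \int_{S} |V_g f|^2 \, dx\, d\xi \le \|f\|_2^2\|g\|_2^2\, \mu_{K\times\hat K}(S).
\]
Since $f,g\neq 0$, dividing gives $\mu_{K\times \hat K}(S)\ge 1$. Here I take $S=\{V_g f\neq 0\}$; this set is open because $V_g f$ is continuous, as noted in Section~\ref{sec3}. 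Using the closed support instead only enlarges the set, so the lower bound holds either way.

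For the equality case, suppose $\mu(S)=1$. Then
\[
\int_S \left(\|f\|_2^2\|g\|_2^2 - |V_g f|^2\right) dx\,d\xi = 0,
\]
and the integrand is nonnegative, so $|V_g f| = \|f\|_2\|g\|_2$ almost everywhere on $S$. Continuity upgrades this to every point of the open set $S=\{V_g f\ne 0\}$: a point where the value is strictly smaller has an open neighborhood inside $S$ where it stays smaller. Open sets have positive Haar measure, so this would contradict the a.e. statement. Thus $|V_g f| = \|f\|_2\|g\|_2 \mathbf 1_S$ everywhere.

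It remains to show that $S$ is compact and open. Openness is already known. For closedness, the continuous function $|V_g f|$ takes only the two values $0$ and $c:=\|f\|_2\|g\|_2>0$. Therefore $S = |V_g f|^{-1}(\{c\}) = |V_g f|^{-1}((c/2,\infty))$ is both closed and open. For compactness I would use that $V_g f$ vanishes at infinity. This should be the main obstacle, since the paper has not stated it. I would prove it by density. For $f,g\in\mathcal S(K)$, $V_g f$ is compactly supported: if $g$ is supported in $\mathfrak P^l$, then $T_x g \cdot f=0$ for $|x|$ large, and $(f\,T_x\bar g)\hat{}$ is supported in a fixed ball, uniformly in $x$, because $f\,T_x\bar g$ is constant on cosets of a fixed $\mathfrak P^k$. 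For general $f,g\in L^2$, choose approximants $f_n,g_n\in\mathcal S(K)$. The estimate
\[
\|V_g f - V_{g_n}f_n\|_\infty \le \|f-f_n\|_2\|g\|_2 + \|f_n\|_2\|g-g_n\|_2
\]
shows that $V_g f\in C_0(K\times\hat K)$. Consequently $S=\{|V_g f|\ge c/2\}$ is a closed subset of a compact set, hence compact.
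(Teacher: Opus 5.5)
Your proof is correct. The first half (Cauchy--Schwarz against the isometry of Corollary~\ref{cor:3}, then the equality case) matches the paper. You spell out two steps the paper only asserts: the continuity upgrade from ``a.e.\ on $S$'' to ``everywhere on $S$'', and the fact that $S$ is clopen because the continuous function $|V_gf|$ takes only the values $0$ and $c$.

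The genuine difference is how you obtain $V_gf\in C_0(K\times\hat K)$, which is where compactness comes from. The paper uses the pointwise bounds $|V_gf(x,\xi)|\le(|f|*|g^*|)(x)$ and $|V_gf(x,\xi)|\le(|\hat f|*|\hat g^*|)(\xi)$, the second via Parseval, together with $L^2*L^2\subset C_0(K)$. It then leaves implicit the small step that $\min(F(x),G(\xi))\to0$ jointly on $K\times\hat K$. That route is general LCA-group reasoning and does not use the non-Archimedean structure.

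You instead exploit the local-field structure directly. For test functions, $V_gf$ is literally compactly supported. In $x$ this is because both functions have compact support. In $\xi$ it is because $f\,T_x\bar g$ is constant on cosets of a fixed $\mathfrak P^k$, so its Fourier transform lives in $(\mathfrak P^k)^\perp$ uniformly in $x$. You then pass to general $L^2$ data through the uniform estimate $\|V_gf-V_{g_n}f_n\|_\infty\le\|f-f_n\|_2\|g\|_2+\|f_n\|_2\|g-g_n\|_2$.

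Your version needs only density of $\mathcal S(K)$ in $L^2(K)$ and delivers joint decay at once, without any frequency-side computation. The paper's version is shorter to state and transfers verbatim to any LCA group.
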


\begin{proof}
By isometry of $V_g f$ and Cauchy-Schwartz inequality, we have 
    \begin{equation}\label{eqa:5}
        \begin{split}
            \|f\|_2^2 \|g\|_2^2 = \int_S |V_g g (x, \xi)|^2 \,dxd\xi &\leq \|f\|_2^2 \|g\|_2^2 \int_S  dxd\xi\\
            &= \|f\|_2^2 \|g\|_2^2 \mu_{K \times \hat{K}}(S).
        \end{split}
    \end{equation}
This implies that $\mu_{K \times \hat{K}}(S) \geq 1$. If  $\mu_{K \times \hat{K}}(S)$ equals $1$ then equality occurs in (\ref{eqa:5}) and we have  $|V_g f(x,\xi)| = \|f\|_2 \|g\|_2$ for almost every $(x,\xi) \in S$. Since $V_g f$ is continuous, $|V_g f| = \|f\|_2\|g\|_2 \boldsymbol{1}_S$ and $S$ is closed. To show that $S$ is compact, we have the following two estimates of $|V_g| f$.
$$|V_g f| \leq (|f| \ast |g^{\ast}|)(x)\,\,\, \text{and}\,\,\,\, |V_g f| \leq (|\hat{f}|\ast |\hat{g}^\ast|)(\xi)$$
where $g^\ast(x) = g(-x)$. Since $(f \ast g) \in C_0(K) $ for $f,g \in L^2(K)$, we have $V_g f$ goes to zero at infinity. So $S$ is contained in a compact set and therefore is compact. $S$ is also open by the definition of $S$ and we have the desired result. 
\end{proof}
In \cite{nicola2024maximally}, the author characterizes all functions $f:\mathcal{G} \to \mathbb{C}$ which are maximally localized in the sense that the ambiguity function of $f$ has minimum support where $\mathcal{G}$ is any locally compact abelian group containing a compact open subgroup. The characterization is provided in terms of the characters of the second degree of the group $\mathcal{G}$, it has been established that these characters are pointwise multipliers of the Feichtinger algebra $S_0(G)$. This property was originally established in \cite{Fei}, where the Feichtinger algebra was first introduced as a \lq new Segal algebra\rq, according to \cite{RS} called the \textquotedblleft Feichtinger algebra\textquotedblright.  For the description of characters of the second degree on a local field, see \cite{reiter2006metaplectic}.

We provide an example of such functions for local fields. If $f = g = \Phi_k$ then
 \begin{equation*}
V_g f =(g.T_x g\hat{)}(\xi)=\begin{cases}
          0 \quad &\text{if} \, x \notin \mathfrak{P}^k \\
           \hat{\Phi}_k (\xi) \quad &\text{if} \, x \in \mathfrak{P}^k. \\
     \end{cases}
  \end{equation*}
  Since $\hat{\Phi}_k = q^{-k} \Phi_{-k}$, support of $\hat{\Phi}_k$ is $\mathfrak{P}^{-k}$. Hence the support of $V_g f$ i.e.,  $S = \mathfrak{P}^{k} \times \mathfrak{P}^{-k}$. Since 
  \begin{equation*}
     \begin{split}
          \mu_{K \times \hat{K}}(\mathfrak{P}^{k} \times \mathfrak{P}^{-k}) &= \int_{\mathfrak{P}^{-k}} \int_{\mathfrak{P}^{k}} \,dxd\xi\\
          &= |\mathfrak{P}^{-k}| |\mathfrak{P}^{k}|= 1,
     \end{split}
  \end{equation*}
  we have $\mu_{K \times \hat{K}}(S) = 1$. By  \cite[Proposition 3.6]{nicola2024maximally}, these are maximal compact isotropic subgroups of $K \times \hat{K}$.

\section{Orthonormal bases}\label{sec5}
In the case of $\mathbb{R}$, the Balian--Low theorem prevents a Gabor orthonormal basis from having a window that is well localized simultaneously in time and frequency, in the standard sense used in time--frequency analysis. This is certainly not true in the case of local field. We will see that the window function $g = q^{\frac{k}{2}}\Phi_k$ provides the orthonormal bases for $L^2(K)$ while having optimal time--frequency localization in the sense of Theorem~\ref{thm:7}. We will also show that for this window $g$, $V_g g \in L^1 (K \times \hat{K})$. However, in case of $\mathbb{R}$, short time Fourier transform of $\mathbf{1}_{[0,1]}$ is not in $L^1(\mathbb{R})$.



\begin{theorem}\label{ONB}
    Let $g = q^{\frac{k}{2}}\Phi_k, k \in \mathbb{Z}$. Then $\{M_\xi T_x g : \,\,(x,\xi) \in K/\mathfrak{P}^k \times \hat{K}/ \mathfrak{P}^{-k} \}$ is an orthonormal bases for $L^2(K)$ and $V_g g \in  L^1 (K \times \hat{K})$.
\end{theorem}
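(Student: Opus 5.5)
We prove the two claims separately. The index set is read as follows: $x$ runs over a fixed set $\Lambda$ of coset representatives of $K/\mathfrak{P}^k$, and $\xi$ runs over a fixed set $\Gamma$ of coset representatives of $\hat K/\mathfrak{P}^{-k}$. Since $g=q^{k/2}\Phi_k$, each $T_xg$ is the normalized indicator $q^{k/2}\mathbf{1}_{x+\mathfrak{P}^k}$. Because $K=\bigsqcup_{x\in\Lambda}(x+\mathfrak{P}^k)$, we have the orthogonal decomposition
\[
L^2(K)=\bigoplus_{x\in\Lambda} L^2(x+\mathfrak{P}^k).
\]
It therefore suffices to show that, for each fixed $x\in\Lambda$, the family $\{M_\xi T_x g:\xi\in\Gamma\}$ is an orthonormal basis of $L^2(x+\mathfrak{P}^k)$.

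\textbf{Step 1 (orthonormality).} For $x\neq x'$ in $\Lambda$ the supports are disjoint, so the inner product is zero. For the same $x$ we compute
\[
\langle M_\xi T_xg, M_{\xi'}T_xg\rangle = q^{k}\int_{x+\mathfrak{P}^k}\chi((\xi-\xi')t)\,dt
= q^k\chi((\xi-\xi')x)\,\hat\Phi_k(\xi'-\xi).
\]
By $\hat\Phi_k=q^{-k}\Phi_{-k}$ this equals $\chi((\xi-\xi')x)$ if $|\xi-\xi'|\le q^k$, and $0$ otherwise. Distinct representatives of $\hat K/\mathfrak{P}^{-k}$ satisfy $|\xi-\xi'|>q^k$, so the inner product is $\delta_{\xi\xi'}$.

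\textbf{Step 2 (completeness).} This is the main point. Translating by $x$ (a unitary map that multiplies each exponential by the constant $\chi(\xi x)$), it suffices to show that the characters $\{\chi_\xi|_{\mathfrak{P}^k}:\xi\in\Gamma\}$ span a dense subspace of $L^2(\mathfrak{P}^k)$. The restriction of $\chi_\xi$ to the compact group $\mathfrak{P}^k$ depends only on the class of $\xi$ modulo the annihilator $(\mathfrak{P}^k)^\perp\cong\mathfrak{P}^{-k}$. Hence $\Gamma$ parametrizes exactly the dual group $\widehat{\mathfrak{P}^k}\cong\hat K/(\mathfrak{P}^k)^\perp$, using the standard duality for a closed subgroup of an LCA group. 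Completeness then follows from Peter--Weyl (Plancherel) for the compact abelian group $\mathfrak{P}^k$: its characters form an orthogonal basis of $L^2(\mathfrak{P}^k)$.

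If a more hands-on argument is wanted, take $h\in\mathcal S(K)$ supported in $\mathfrak{P}^k$; such $h$ are dense in $L^2(\mathfrak{P}^k)$. Then $\hat h$ is constant on cosets of $\mathfrak{P}^{-k}$. Plancherel on $K$ gives
\[
\|h\|_2^2=\int_{\hat K}|\hat h|^2=\sum_{\xi\in\Gamma}q^k|\hat h(\xi)|^2=\sum_{\xi\in\Gamma}|\langle h,M_\xi g\rangle|^2 ,
\]
which is Parseval's identity and yields completeness. The main care needed is in justifying that $\hat h$ is constant on $\mathfrak{P}^{-k}$-cosets for every $h$ in $L^2(\mathfrak{P}^k)$, not just for test functions. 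This follows by writing $\hat h(\xi+\eta)=\int h(t)\chi(-(\xi+\eta)t)\,dt$ and noting that $\chi(\eta t)=1$ for $|\eta|\le q^k$ and $t\in\mathfrak{P}^k$.

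\textbf{Step 3 ($V_gg\in L^1$).} Using the computation at the end of Section~\ref{sec4} with the normalization factor $q^k$, we get
\[
V_gg(x,\xi)=q^k\,\mathbf{1}_{\mathfrak{P}^k}(x)\,\hat\Phi_k(\xi)=\mathbf{1}_{\mathfrak{P}^k\times\mathfrak{P}^{-k}}(x,\xi).
\]
Consequently $\|V_gg\|_{L^1}=\mu_{K\times\hat K}(\mathfrak{P}^k\times\mathfrak{P}^{-k})=1<\infty$.
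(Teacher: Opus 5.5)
Your proof is correct, but it takes a different route from the paper's.

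\textbf{The paper's route.} The paper never checks orthonormality or completeness directly. It starts from the continuous inversion formula of Corollary~\ref{cor:4} and splits the phase-space integral over the cells $(\lambda+\mathfrak{P}^k)\times(\gamma+\mathfrak{P}^{-k})$. For $(x,\xi)\in\mathfrak{P}^k\times\mathfrak{P}^{-k}$ it shows that $V_gf(\lambda+x,\gamma+\xi)=\overline{\chi_\xi(\lambda)}\,V_gf(\lambda,\gamma)$ and $M_{\gamma+\xi}T_{\lambda+x}g=\chi_\xi(\lambda)\,M_\gamma T_\lambda g$. The integrand is therefore constant on each cell of measure $1$, and the integral collapses to $f=\sum_{\lambda,\gamma}\langle f,M_\gamma T_\lambda g\rangle M_\gamma T_\lambda g$. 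Strictly speaking this only yields a Parseval frame. The orthonormal-basis conclusion is left implicit; it follows because each $M_\gamma T_\lambda g$ has norm $1$.

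\textbf{Your route.} You decompose $L^2(K)$ orthogonally over the cosets of $\mathfrak{P}^k$ and verify orthonormality explicitly via $\hat\Phi_k=q^{-k}\Phi_{-k}$. Completeness then comes from Fourier analysis on the compact group $\mathfrak{P}^k$, either through Peter--Weyl or through your Plancherel computation. As you note, the latter applies directly to every $h\in L^2(\mathfrak{P}^k)\subset L^1(\mathfrak{P}^k)$.

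\textbf{What each buys.} Your argument is more elementary and self-contained. It needs neither the STFT nor a justification for splitting a weakly defined vector-valued integral into cells, and it makes orthonormality explicit. It also displays the basis as the local-field analogue of ``indicator of a cell times Fourier series on that cell.'' The paper's argument instead explains the result in the paper's own language. Sampling $V_gf$ once per minimal phase-space cell loses no information, where the cell $\mathfrak{P}^k\times\mathfrak{P}^{-k}$ is the support of $V_gg$ and has the minimal measure $1$ allowed by Theorem~\ref{thm:7}. Your Step~3 is the same computation as the paper's.
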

\begin{proof}
    Since $\|g\|_2=1$, we have the inversion formula from Corollary~\ref{cor:4},
    $$f = \int_K \int_{\hat{K}} V_g f (x, \xi) M_\xi T_x g\, d\xi dx.$$
    Let $\Lambda$ be the set of coset representatives of $\mathfrak{P}^k$ in $K$ and $\Gamma$ be the set of coset representatives of $\mathfrak{P}^k$ in $\hat{K}$. Now, we rewrite the above integral as 
    \begin{equation}\label{eq:5}
        f = \sum_{\lambda \in \Lambda}\int_{\lambda + \mathfrak{P}^k} \sum_{\gamma \in \Gamma}\int_{\gamma + \mathfrak{P}^{-k}} V_g f (x, \xi) M_\xi T_x g\, d\xi dx.
    \end{equation}

    Now, let us first calculate $V_g f(\lambda + x, \gamma + \xi)$ where $(\lambda , \gamma) \in \Lambda \times \Gamma$ and $(x, \xi) \in \mathfrak{P}^k \times \mathfrak{P}^{-k}$. Since $g$ is constant on the cosets of $\mathfrak{P}^k$, we have
    \begin{equation*}
        \begin{split}
            V_g f(\lambda + x, \gamma + \xi) &= \int_K f(t) \overline{\chi_{\gamma + \xi} g(t - \lambda - x)}\, dt\\
            &= \int_K f(t) \overline{\chi_\gamma (t) \chi_\xi (t)g(t-\lambda)}\, dt.
        \end{split}
    \end{equation*}
     We see that $g(t - \lambda) \neq 0$ only when $t  \in \lambda + \mathfrak{P}^k$. Now, for any $t \in \lambda + \mathfrak{P}^k$
     \begin{equation*}
         \begin{split}
             \chi_\xi (t) &= \chi_\xi (\lambda + u)\,\,\,\,\,\,\,\text{for some}\,\, u \in \mathfrak{P}^k\\
             &= \chi_\xi(\lambda).\chi_\xi (u) = \chi_\xi(\lambda).
         \end{split} 
     \end{equation*}

So, \begin{equation*}
    \begin{split}
        V_g f(\lambda + x, \gamma + \xi) &= \overline{\chi_\xi(\lambda)} \int_K f(t) \overline{\chi_\gamma(t) g(t - \lambda)}\,dt\\
        &= \overline{\chi_\xi(\lambda)} V_g f(\lambda, \gamma).
    \end{split}
\end{equation*}
It is also implicitly clear from the above calculations that for any $(\lambda , \gamma) \in \Lambda \times \Gamma$ and $(x, \xi) \in \mathfrak{P}^k \times \mathfrak{P}^{-k}$
$$M_{\gamma + \xi}T_{\lambda + x}g = \chi_\xi (\lambda) M_\gamma T_\lambda g.$$
Now substituting these values in the equation (\ref{eq:5}) yields
$$f = \sum_{\lambda \in \Lambda} \sum_{\gamma \in \Gamma} V_g f(\lambda,\gamma) M_\gamma T_\lambda   g =  \sum_{\lambda \in \Lambda} \sum_{\gamma \in \Gamma} \langle f, M_\gamma T_\lambda g \rangle M_\gamma T_\lambda   g.$$

This is the Gabor orthonormal bases with maximal localization in time and frequency in the sense of Theorem \ref{thm:7}. Further see that
\begin{equation*}
    \begin{split}
        \|V_g g\|_{L^1(K \times \hat{K})} &= \int_K \int_{\hat{K}} |V_g g(x,\xi)|\, d\xi dx\\
        &= \int_K \int_{\hat{K}} (g. T_x g\hat{)}(\xi) \,d\xi dx.
    \end{split}
\end{equation*}
Since $g=\Phi_k$, we have

  \begin{equation*}
(g.T_x g\hat{)}(\xi)=\begin{cases}
          0 \quad &\text{if} \, x \notin \mathfrak{P}^k \\
          q^k \hat{\Phi}_k (\xi) \quad &\text{if} \, x \in \mathfrak{P}^k.\\
     \end{cases}
  \end{equation*}
Now
\begin{equation*}
    \begin{split}
        \|V_g g\|_{L^1(K \times \hat{K})} &= q^k \int_{\mathfrak{P}^k} \int_{\hat{K}} |\hat{\Phi}_k(\xi)|\, d\xi dx\\
        &=\int_{\hat{K}} |\hat{\Phi}_k(\xi)| \,d\xi \\
        &=1,
    \end{split}
\end{equation*}
where we have used that $\hat{\Phi}_k = q^{-k}\Phi_{-k}$.
\end{proof}

\section{Representations of the Gabor frame operator on local fields: Walnut, Wexler-Raz and  Janssen Representations}\label{sec6}

In this section, we develop fundamental operator representations for Gabor systems over local fields. Specifically, we derive Walnut, Wexler--Raz, and Janssen formulas in the non-Archimedean setting, using compact open subgroups $\mathfrak{P}^k$ and their annihilators $\mathfrak{P}^{-k}$. We would like to mention that this is a standard result on Euclidean space~\cite{Grochenig01} and locally compact abelian group see~\cite{Fk, feichtinger2009gabor, FZ, jakob2, jakob}, but to our knowledge they have not been explicitly recorded in the local--field literature. We are specifically proving the local field version, i.e., under our Haar normalization for which density factor will become $1$.

\begin{lemma}[Commutation of translation and modulation]
For all $x\in K$ and $\xi\in\widehat K$,
\[
M_\xi T_x \;=\; \xi(x)\, T_x M_\xi,
\qquad
T_x M_\xi \;=\; \overline{\xi(x)}\, M_\xi T_x.
\]
\end{lemma}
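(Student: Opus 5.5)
The plan is to verify both identities pointwise on an arbitrary $f \in L^2(K)$, using only the definitions $T_x f(t) = f(t-x)$ and $M_\xi f(t) = \xi(t) f(t)$. Here $\xi \in \widehat K$ is viewed as a character, so $\xi = \chi_\lambda$ under the self-duality $\lambda \leftrightarrow \chi_\lambda$ of Section~\ref{sec2}. Since both sides of each identity are bounded (indeed unitary) operators on $L^2(K)$, equality almost everywhere for every $f$ gives equality of operators.

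For the first identity I compute both sides directly. On one hand, $M_\xi T_x f(t) = \xi(t) f(t-x)$. On the other hand, $T_x M_\xi f(t) = (M_\xi f)(t-x) = \xi(t-x) f(t-x)$. Because $\xi$ is a character of the additive group $K^+$, we have
\[
\xi(t-x) = \xi(t)\,\xi(-x) = \xi(t)\,\overline{\xi(x)},
\]
where $\xi(-x) = \overline{\xi(x)}$ holds since $|\xi(x)| = 1$. It follows that $T_x M_\xi f(t) = \overline{\xi(x)}\, M_\xi T_x f(t)$, which is exactly the second displayed identity. Multiplying both sides by the unimodular scalar $\xi(x)$ and using $\xi(x)\overline{\xi(x)} = 1$ then yields $M_\xi T_x = \xi(x)\, T_x M_\xi$, the first identity.

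There is no real obstacle; the argument is a one-line computation. The only care needed is in the sign and conjugation conventions. With the paper's choice $M_\xi f = \chi_\xi f$ and $T_x f = f(\cdot - x)$, the phase comes out as $\xi(x)$ (equivalently $\chi(\xi x)$) and not as its conjugate. It is also worth noting that the computation uses only the homomorphism property of characters. It therefore holds verbatim on any LCA group and does not depend on the ultrametric structure of $K$.
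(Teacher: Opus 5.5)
Your proof is correct and is essentially the paper's argument: a pointwise computation on $f\in L^2(K)$ using only the character identity $\xi(t-x)=\xi(t)\overline{\xi(x)}$, with the other identity obtained by multiplying by the unimodular factor. The only difference is the order: you derive $T_xM_\xi=\overline{\xi(x)}\,M_\xi T_x$ first and deduce the other identity, whereas the paper proves $M_\xi T_x=\xi(x)\,T_xM_\xi$ and leaves the second implicit.
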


\begin{proof}
For $f\in L^2(K)$,
\[
(M_\xi T_x f)(t) = \xi(t)\,f(t-x).
\]
Since $\xi(t)=\xi(x)\,\xi(t-x)$, we obtain
\[
(M_\xi T_x f)(t) = \xi(x)\,(\xi(t-x)f(t-x)) 
= \xi(x)\,(T_x M_\xi f)(t).
\]
\end{proof}

\begin{lemma}[Adjoint of the time--frequency shift]
With $\pi(x,\xi):=M_\xi T_x$, one has
\[
\pi(x,\xi)^* \;=\; \xi(x)^{-1}\,\pi(-x,-\xi).
\]
\end{lemma}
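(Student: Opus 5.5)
The plan is to use two facts: $\pi(x,\xi)=M_\xi T_x$ is unitary, and $M_\xi$ and $T_x$ are unitary separately. For unitary operators, adjoints reverse products and invert each factor, so
\[
\pi(x,\xi)^* = (M_\xi T_x)^* = T_x^* M_\xi^* = T_{-x} M_{-\xi}.
\]
Here $T_x^*=T_{-x}$ and $M_\xi^*=M_{-\xi}=M_{\overline{\xi}}$. I will check both identities directly from the definitions. For translation, the substitution $t\mapsto t+x$ and invariance of Haar measure on $K$ give $\langle T_x f,h\rangle=\langle f,T_{-x}h\rangle$. For modulation, $\langle M_\xi f,h\rangle=\int f\,\xi\,\bar h=\langle f,\bar\xi h\rangle$, and $\bar\xi=\xi^{-1}=-\xi$ in $\widehat K$ because characters take values in the unit circle.

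The second step is to put $T_{-x}M_{-\xi}$ back into the form $M T$. I will apply the second identity of the preceding commutation lemma, $T_y M_\eta=\overline{\eta(y)}\,M_\eta T_y$, with $y=-x$ and $\eta=-\xi$. This gives
\[
T_{-x}M_{-\xi}=\overline{(-\xi)(-x)}\,M_{-\xi}T_{-x}=\overline{(-\xi)(-x)}\,\pi(-x,-\xi).
\]

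The only place that needs care is simplifying the scalar $\overline{(-\xi)(-x)}$, since the character pairing is easy to get wrong by a sign or a conjugate. I will use the bicharacter property: $(-\xi)(-x)=\overline{\xi(-x)}=\xi(x)$. Its conjugate is therefore $\overline{\xi(x)}=\xi(x)^{-1}$, and this yields exactly $\pi(x,\xi)^*=\xi(x)^{-1}\pi(-x,-\xi)$.

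As a sanity check I will verify the result by applying both sides to $f$ pointwise. The left side is $(T_{-x}M_{-\xi}f)(t)=\overline{\xi(t+x)}f(t+x)$. The right side is $\xi(x)^{-1}\overline{\xi(t)}f(t+x)$. These agree because $\xi(t+x)=\xi(t)\xi(x)$.
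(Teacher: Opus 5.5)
Your proposal is correct and follows essentially the same route as the paper: you obtain $(M_\xi T_x)^* = T_{-x}M_{-\xi}$ from $T_x^*=T_{-x}$ and $M_\xi^*=M_{-\xi}$, then reorder with the commutation lemma. Beyond the paper, you spell out the scalar simplification $\overline{(-\xi)(-x)}=\overline{\xi(x)}=\xi(x)^{-1}$ and add a pointwise check, both of which the paper's one-line use of the commutation identity leaves implicit.
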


\begin{proof}
Note that $T_x^*=T_{-x}$ and $M_\xi^*=M_{-\xi}$. Thus
\[
\pi(x,\xi)^* = (M_\xi T_x)^* = T_{-x} M_{-\xi}.
\]
By the previous commutation identity,
$T_{-x} M_{-\xi} = \xi(x)^{-1}\,M_{-\xi}T_{-x} 
= \xi(x)^{-1}\pi(-x,-\xi).$
\end{proof}

\begin{remark}[Inner product convention]
We use the convention,
    $\langle f,h\rangle=\int f(t)\,\overline{h(t)}\,dt$,
conjugate--linear in the second term. All computations below are consistent with this choice.
\end{remark}

Let us fix some notations for the following three subsections. For $k\in\mathbb Z$, set $H:=\mathfrak{P}^k$, $H^\perp:=\mathfrak{P}^{-k}$, let $\Lambda\subset K$ and $\Gamma\subset \widehat K$ be complete sets of coset representatives
for $K/H$ and $\widehat K/H^\perp$, respectively so that $K=\bigsqcup_{\lambda\in\Lambda}(\lambda+H)$ and 
$\widehat K=\bigsqcup_{\gamma\in\Gamma}(\gamma+H^\perp)$.

\subsection{Walnut representation}
\begin{theorem}[Walnut representation on a local field]
Let $g\in L^2(K)$ and $H$, $H^\perp$ and $\Lambda$, $\Gamma$ be as above.

Assume the summability condition
\[
\sum_{\lambda\in\Lambda}\sup_{t\in K, u\in H} 
|g(t-\lambda)\,\overline{g(t-\lambda-u)}|<\infty.
\]
Then the frame operator
\[
S_g f \;=\; 
\sum_{\lambda\in\Lambda}\sum_{\gamma\in\Gamma}
\langle f,\,M_\gamma T_\lambda g\rangle\, M_\gamma T_\lambda g
\]
has the Walnut form
\[
(S_g f)(t) \;=\; 
\int_H G_u(t)\, f(t-u)du,\qquad f\in L^2(K),
\]
where
\[
G_u(t)
\;:=\;
C\sum_{\lambda\in\Lambda}\ 
g(t-\lambda)\,\overline{g(t-\lambda-u)}.
\]
The series converges absolutely almost everywhere and in $L^2(K)$.
\end{theorem}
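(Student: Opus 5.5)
The plan is to compute $S_g f$ pointwise by carrying out the sum over $\Gamma$ first, for each fixed $\lambda$, as an orthogonal expansion over a coset of $H$. The sum over $\Lambda$ is handled afterwards using the summability hypothesis. I expect the constant to come out as $C=|H^\perp|=q^{k}$, with $|H|=q^{-k}$; this has to be checked against the normalization $|\mathfrak{P}^k||\mathfrak{P}^{-k}|=1$ fixed in Section~\ref{sec2}.

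\textbf{Step 1 (density reduction).} I first assume $f\in\mathcal{S}(K)$, so that $f$ is compactly supported and locally constant. This makes all sums finite or absolutely convergent. For fixed $\lambda$, put $F_\lambda:=f\cdot T_\lambda\bar g$. Then
\[
\sum_{\gamma\in\Gamma}\langle f,M_\gamma T_\lambda g\rangle\,(M_\gamma T_\lambda g)(t)=g(t-\lambda)\sum_{\gamma\in\Gamma}\hat F_\lambda(\gamma)\,\chi_\gamma(t).
\]

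\textbf{Step 2 (the $\Gamma$-sum as a Fourier series on $H$-cosets).} The dual of the compact group $H=\mathfrak{P}^k$ is $\widehat K/H^\perp$, and $\Gamma$ is a set of representatives for this quotient. Hence, for each $a\in K$, the family $\{|H|^{-1/2}\chi_\gamma|_{a+H}:\gamma\in\Gamma\}$ is an orthonormal basis of $L^2(a+H)$. Using this, I will show that for $t\in a+H$,
\[
\sum_{\gamma}\Big(\int_{a+H}F_\lambda(s)\overline{\chi_\gamma(s)}\,ds\Big)\chi_\gamma(t)=|H|^{-1}\,\mathbf{1}_{a+H}\,F_\lambda(t)
\]
in $L^2(a+H)$, and a.e.\ for $F_\lambda$ locally constant.

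The delicate point is the contribution of $F_\lambda$ on cosets other than $a+H$. Writing $F_\lambda=\sum_{b}F_\lambda\mathbf{1}_{b+H}$, the pieces supported on $b+H$ with $b\notin a+H$ must give no contribution at the point $t\in a+H$. My first attempt is to choose $\Gamma$ adapted to the filtration $\mathfrak{P}^{-k}\subset\mathfrak{P}^{-k-1}\subset\cdots$, built from Teichm\"uller-type digits, and to group the $\gamma$-sum over cosets of $\mathfrak{P}^{-k-j}$. Then $\sum\chi_\gamma(t-s)$ over each finite block vanishes unless $t-s\in\mathfrak{P}^{k+j}$. Letting $j$ grow would localize $s$ to $t+H$.

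The outcome is a kernel identity
\[
\sum_{\gamma}\hat F_\lambda(\gamma)\chi_\gamma(t)=|H|^{-1}\int_H F_\lambda(t-u)\,du .
\]
Since $f(t-u)\,\overline{g(t-u-\lambda)}$ is exactly $F_\lambda(t-u)$, multiplying by $g(t-\lambda)$ produces the integrand $g(t-\lambda)\,\overline{g(t-\lambda-u)}\,f(t-u)$. I expect this step, and specifically the claim that cross-coset terms vanish independently of the choice of representatives, to be the main obstacle. If it fails for arbitrary $\Gamma$, I would restrict to the canonical digit choice of $\Gamma$ and note the dependence.

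\textbf{Step 3 (sum over $\Lambda$ and extension).} Summing over $\lambda$ and interchanging $\sum_\lambda$ with $\int_H$ gives $(S_gf)(t)=\int_H G_u(t)f(t-u)\,du$. The interchange is justified by Fubini, because
\[
\sum_\lambda\sup_{t,u}|g(t-\lambda)\overline{g(t-\lambda-u)}|<\infty
\]
yields absolute a.e.\ convergence and $\sup_{t,u}|G_u(t)|\le C\cdot(\text{that sum})$. Minkowski's integral inequality then bounds the Walnut operator on $L^2$ by $C|H|\sup|G_u|$, so it is bounded on $L^2(K)$. The same bound shows $S_g$ is bounded on $\mathcal{S}(K)$. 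Both sides are bounded operators agreeing on the dense subspace $\mathcal{S}(K)$, so the identity extends to all $f\in L^2(K)$, with convergence of the series in $L^2$.
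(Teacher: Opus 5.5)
\textbf{The gap is in Step 2, and it cannot be closed, because the kernel identity you are aiming for is false.} Your own coset computation already shows this. Since $\{|H|^{-1/2}\chi_\gamma|_{a+H}\}_{\gamma\in\Gamma}$ is an orthonormal basis of $L^2(a+H)$, one gets $\sum_{\gamma}\big(\int_{a+H}F\,\overline{\chi_\gamma}\big)\chi_\gamma(t)=|H|\,F(t)$ for $t\in a+H$. (The constant is $|H|$, not $|H|^{-1}$.) This is the \emph{pointwise value} of $F$, not the average $|H|^{-1}\int_H F(t-u)\,du$.

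Your block sums say the same thing. For $x\in H$ and any choice of representatives, $\sum_{\gamma\in\Gamma\cap\mathfrak P^{-k-j}}\chi_\gamma(x)=q^{j}\mathbf 1_{\mathfrak P^{k+j}}(x)$. This is an approximate identity of total mass $|H|$, so letting $j\to\infty$ localizes $s$ to the point $t$, not to $t+H$.

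For $x\notin H$ the block sums do not vanish; they depend on the representatives. Suppose $\Gamma$ is a subgroup; your digit choice achieves this only in positive characteristic, and in characteristic $0$ no subgroup of representatives exists. Then the sum over the finite group $\Gamma\cap\mathfrak P^{-k-j}$ equals $q^j$ on its annihilator $\Gamma^\perp+\mathfrak P^{k+j}$. That is a non-compact open subgroup strictly larger than $\mathfrak P^{k+j}$. In the limit, Poisson summation gives $|H|\sum_{\omega\in\Gamma^\perp}\delta_\omega$. So even with your canonical choice, the outcome is the discrete Walnut sum $(S_gf)(t)=|H|\sum_{\omega\in\Gamma^\perp}\big(\sum_{\lambda}g(t-\lambda)\overline{g(t-\lambda-\omega)}\big)f(t-\omega)$, not an integral over $H$.

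\textbf{The conclusion itself fails for every constant $C$.} Take $g=\mathbf 1_H=\Phi_k$. For each $\lambda$, $\{|H|^{-1/2}\chi_\gamma\mathbf 1_{\lambda+H}\}_{\gamma\in\Gamma}$ is an orthonormal basis of $L^2(\lambda+H)$, so $S_g=|H|\,I$; this is Theorem~\ref{ONB}. On the other hand, $G_u(t)=C$ for all $u\in H$, so the claimed right-hand side is $C\,(f*\mathbf 1_H)$. Its Fourier transform is $C|H|\mathbf 1_{H^\perp}\hat f$, which vanishes for $f=M_\eta\mathbf 1_H$ with $\eta\notin H^\perp$, whereas $S_g f=|H|f\neq 0$.

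As literally written, the summability hypothesis excludes this $g$, and in fact every nonzero $g$. After substituting $t\mapsto t+\lambda$, each summand equals $\sup_{s\in K,\,u\in H}|g(s)g(s-u)|$, independent of $\lambda$, and $\Lambda$ is infinite. So the theorem holds only vacuously, for $g=0$. Your Step 3 uses the hypothesis as if it were a genuine Walnut-type condition.

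The paper's proof does not get around this. Its ``periodization identity'' $\sum_{\gamma\in\Gamma}\gamma(u)=C\,\mathbf 1_H(u)$ is exactly the false step above: on $H$ the sum is the distribution $|H|\delta_0$, and off $H$ it depends on $\Gamma$. This is also why Proposition~\ref{prop:14} with $m=0$ contradicts Theorem~\ref{ONB} with $k=0$.
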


\begin{proof}
Insert the inner product 
$\langle f, M_\gamma T_\lambda g\rangle
= \int f(s)\,\overline{\gamma(s)\,g(s-\lambda)}\,ds$
into the definition of $S_g f$.
Rearranging sums and integrals (justified by absolute convergence),
and using the coset decomposition of $\widehat K$ together with the fact 
that characters are constant on $H$--cosets, one arrives at the claimed form.


Recall the frame operator
\[
S_g f \;=\; \sum_{\lambda\in\Lambda}\sum_{\gamma\in\Gamma}
\langle f,\,M_\gamma T_\lambda g\rangle\, M_\gamma T_\lambda g .
\]
We compute $(S_g f)(t)$ pointwise (a.e.) for fixed $t\in K$. First expand the inner product:
\[
\langle f,\,M_\gamma T_\lambda g\rangle
= \int_{K} f(s)\ \overline{\gamma(s)\,g(s-\lambda)}\,ds.
\]
Substituting into $S_g f$ and using absolute convergence to interchange sums and integral, we get
\[
\begin{aligned}
(S_g f)(t)
&= \sum_{\lambda\in\Lambda}\sum_{\gamma\in\Gamma}
\Big(\int_{K} f(s)\,\overline{\gamma(s)\,g(s-\lambda)}\,ds\Big)\ \gamma(t)\,g(t-\lambda)\\[4pt]
&= \int_{K}f(s)\ \Big(\sum_{\lambda\in\Lambda}\sum_{\gamma\in\Gamma}
\overline{\gamma(s)}\gamma(t)\,\overline{g(s-\lambda)}\,g(t-\lambda)\Big)\,ds.
\end{aligned}
\]
Combine the two characters we get,
\[
(S_g f)(t)
= \int_{K} f(s)\ \Big(\sum_{\lambda\in\Lambda} \overline{g(s-\lambda)}g(t-\lambda)
\sum_{\gamma\in\Gamma}\gamma(t-s)\Big)\,ds.
\tag{1}
\label{eq:Walnut-step1}
\]

At this point we use the periodization identity for the compact open subgroup $H=\mathfrak{P}^k$, for every $u\in K$ the finite/periodized sum of characters satisfies
\[
\sum_{\gamma\in\Gamma}\gamma(u)\ =\ C\cdot \mathbf{1}_{H}(u),
\]
where $C=\mu(H)^{-1}$ with our Haar-normalization (in particular in the normalization used in Sections~3--5 one often has $C=1$).  Using this identity with $u=t-s$ gives
\[
\sum_{\gamma\in\Gamma}\gamma(t-s) \;=\; C\cdot \mathbf{1}_{H}(t-s).
\]

Inserting this into \eqref{eq:Walnut-step1} yields
\[
(S_g f)(t)
= C\int_{K}f(s)\ \Big(\sum_{\lambda\in\Lambda} \overline{g(s-\lambda)}g(t-\lambda)\,\mathbf{1}_H(t-s)\Big)\,ds.
\]

Now make the change of variable $u=t-s\in H$, so $s=t-u$ and $ds=du$, we have
\[
(S_g f)(t) = C\sum_{\lambda\in\Lambda}\int_{u\in H}f(t-u)\ \overline{g(t-\lambda-u)}\,g(t-\lambda)\,du.
\]

Now we can write 
\[
(S_g f)(t) = \int_{H}G_u(t)f(t-u)du,
\]
where
\[
G_u(t)
\;:=\;
C\sum_{\lambda\in\Lambda}\ 
g(t-\lambda)\,\overline{g(t-\lambda-u)}.
\]

\end{proof}

\begin{remark}: 
Since the above Walnut representation is abstract, we therefore want to calculate $G_u(t)$ for a specific function $g$. The obvious choice is the characteristic function of a ball that is compactly supported and locally constant. The following proposition provides an explicit calculation.
\end{remark}
\qed

\begin{proposition}\label{prop:14}
   Let $K$ be a local field and $H=\mathfrak{D}$, $\Lambda$ and $\Gamma$ be a complete set of coset representatives for $K/H$ and $\widehat K/H^{\perp}$, respectively. Choose $g(t)={\bf 1}_{\mathfrak{p}^m\mathfrak{D}}(t)$. Then for all $f\in L^2(K)$, we have
   \[
   (S_g f)(t)= \int_{\mathfrak{p}^m\mathfrak{D}} f(t-s)ds.
   \]
   And in the Fourier domain
   \[
   \widehat{S_gf}(\xi) = {\bf 1}_{\mathfrak{p}^m\mathfrak{D}}(\xi)\widehat{f}(\xi).
   \]
   In particular, $\{M_{\gamma}T_{\lambda}g:\gamma\in\Gamma, \lambda\in\Lambda\}$ is a Parseval frame for the subspace
   $V_m=\{f:\text{supp}\hat{f}\subseteq \mathfrak{p}^m\mathfrak{D}\}.$

\end{proposition}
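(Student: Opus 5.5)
The plan is to apply the Walnut representation just proved, with $H=\mathfrak{D}=\mathfrak{P}^0$ and $H^\perp=\mathfrak{D}$. With our Haar normalization $|\mathfrak{D}|=1$, the constant in the periodization identity is $C=\mu(H)^{-1}=1$. The first step is to check the summability hypothesis for $g={\bf 1}_{\mathfrak{p}^m\mathfrak{D}}$. The product $g(t-\lambda)\overline{g(t-\lambda-u)}$ is bounded by $1$. Moreover, by the ultrametric inequality, for fixed $t$ only the coset representatives $\lambda$ with $t-\lambda\in\mathfrak{p}^m\mathfrak{D}$ contribute, and there are finitely many of them, since $\mathfrak{p}^m\mathfrak{D}$ meets only finitely many cosets of the compact open subgroup $\mathfrak{D}$. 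So the hypothesis holds and
\[
(S_gf)(t)=\int_{\mathfrak{D}}G_u(t)f(t-u)\,du,\qquad G_u(t)=\sum_{\lambda\in\Lambda}{\bf 1}_{\mathfrak{p}^m\mathfrak{D}}(t-\lambda){\bf 1}_{\mathfrak{p}^m\mathfrak{D}}(t-\lambda-u).
\]

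The second step is to evaluate the kernel $G_u(t)$ explicitly. Because $\mathfrak{p}^m\mathfrak{D}$ is a subgroup, if $t-\lambda\in\mathfrak{p}^m\mathfrak{D}$ then $t-\lambda-u\in\mathfrak{p}^m\mathfrak{D}$ if and only if $u\in\mathfrak{p}^m\mathfrak{D}$. Hence
\[
G_u(t)={\bf 1}_{\mathfrak{p}^m\mathfrak{D}}(u)\sum_{\lambda}{\bf 1}_{\mathfrak{p}^m\mathfrak{D}}(t-\lambda).
\]
I then need the $\Lambda$-periodization $\sum_\lambda {\bf 1}_{\mathfrak{p}^m\mathfrak{D}}(t-\lambda)$ to equal $1$, and I will establish this using the coset structure of $K$ relative to $\mathfrak{D}$ and $\mathfrak{p}^m\mathfrak{D}$. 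Substituting $G_u(t)={\bf 1}_{\mathfrak{p}^m\mathfrak{D}}(u)$ and restricting the integral over $\mathfrak{D}$ to $\mathfrak{p}^m\mathfrak{D}$ then gives the first formula $(S_gf)(t)=\int_{\mathfrak{p}^m\mathfrak{D}}f(t-s)\,ds=(f\ast{\bf 1}_{\mathfrak{p}^m\mathfrak{D}})(t)$.

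The third step is to pass to the Fourier side. By the convolution theorem, $\widehat{S_gf}=\widehat{{\bf 1}_{\mathfrak{p}^m\mathfrak{D}}}\,\hat f$. I will compute $\widehat{{\bf 1}_{\mathfrak{p}^m\mathfrak{D}}}$ with the formula $\hat\Phi_k=q^{-k}\Phi_{-k}$ from Section~\ref{sec2} and match the resulting multiplier with ${\bf 1}_{\mathfrak{p}^m\mathfrak{D}}(\xi)$ under the identification $\hat K\cong K$ and the norm conventions fixed in Section~\ref{sec2}.

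The Parseval-frame claim then follows from Plancherel. For $f\in V_m$ the multiplier acts as the identity on $\hat f$, so $S_gf=f$. Since $S_g$ is given by a real, idempotent Fourier multiplier, it is the orthogonal projection onto $V_m$. Therefore $\sum_{\lambda,\gamma}|\langle f,M_\gamma T_\lambda g\rangle|^2=\langle S_gf,f\rangle=\|f\|_2^2$ for every $f\in V_m$.

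I expect the main obstacle to lie in the second and third steps, namely in the bookkeeping of normalizations. I must confirm that the periodization $\sum_\lambda{\bf 1}_{\mathfrak{p}^m\mathfrak{D}}(t-\lambda)$ is identically $1$ for the given $\Lambda$. I must also confirm that the Fourier multiplier $\widehat{{\bf 1}_{\mathfrak{p}^m\mathfrak{D}}}$ agrees exactly with ${\bf 1}_{\mathfrak{p}^m\mathfrak{D}}$ under the paper's self-duality and Haar conventions. I will treat the case $m=0$ first, where both facts are immediate, and then track the constants $q^{\pm m}$ carefully for general $m$.
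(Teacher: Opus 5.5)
Your plan is the paper's own route: explicit Walnut kernel, then the convolution theorem, then Plancherel. The two items you set aside as ``bookkeeping'' are exactly where the paper's proof goes wrong without comment. It writes $g(t-\lambda)g(t-\lambda-u)=\mathbf{1}_{\mathfrak{p}^m\mathfrak{D}}(u)$, which discards the factor $\mathbf{1}_{\mathfrak{p}^m\mathfrak{D}}(t-\lambda)$ and the sum over $\lambda$, and it then passes to the Fourier side with a bare ``hence''. For $m\neq 0$ neither item can be confirmed.

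\begin{itemize}
\item \emph{Periodization.} Each coset of $\mathfrak{D}$ contains exactly one point of $\Lambda$. So for $m>0$ you get $\sum_{\lambda\in\Lambda}\mathbf{1}_{\mathfrak{p}^m\mathfrak{D}}(t-\lambda)=\mathbf{1}_{\Lambda+\mathfrak{p}^m\mathfrak{D}}(t)$, which vanishes on each set $\lambda+(\mathfrak{D}\setminus\mathfrak{p}^m\mathfrak{D})$. For $m<0$ the sum is identically $q^{-m}$. In that case the $u$-integral in the Walnut formula also runs only over $H=\mathfrak{D}\subsetneq\mathfrak{p}^m\mathfrak{D}$, so it cannot be ``restricted'' to $\mathfrak{p}^m\mathfrak{D}$.
\item \emph{Fourier side.} The paper's own formula $\hat\Phi_k=q^{-k}\Phi_{-k}$ gives $\widehat{\mathbf{1}_{\mathfrak{p}^m\mathfrak{D}}}=q^{-m}\mathbf{1}_{\mathfrak{p}^{-m}\mathfrak{D}}$. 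No choice of conventions turns $f\mapsto f\ast\mathbf{1}_{\mathfrak{p}^m\mathfrak{D}}$ into the multiplier $\mathbf{1}_{\mathfrak{p}^m\mathfrak{D}}(\xi)$ unless $m=0$.
\end{itemize}

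More seriously, the case $m=0$ already shows the conclusion is false, even though both of your facts hold there. So the defect sits upstream, in the Walnut theorem you invoke.

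\begin{itemize}
\item \emph{Counterexample at $m=0$.} Theorem~\ref{ONB} with $k=0$ says $\{M_\gamma T_\lambda\mathbf{1}_{\mathfrak{D}}\}$ is an orthonormal basis, hence $S_g=I$. But $f\mapsto f\ast\mathbf{1}_{\mathfrak{D}}$ is not the identity: for example $\mathbf{1}_{\mathfrak{P}}\ast\mathbf{1}_{\mathfrak{D}}=q^{-1}\mathbf{1}_{\mathfrak{D}}$.
\item \emph{The faulty ingredient.} It is the identity $\sum_{\gamma\in\Gamma}\gamma(u)=C\,\mathbf{1}_H(u)$. Restriction $\gamma\mapsto\gamma|_{\mathfrak{D}}$ maps $\Gamma$ bijectively onto the dual of the compact group $\mathfrak{D}$, which has measure $1$. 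Hence this sum is the Dirac mass at $0$ on $\mathfrak{D}$, not the indicator of $\mathfrak{D}$.
\item \emph{Correct computation for $m\ge 0$.} Each $T_\lambda g$ is supported in $\lambda+\mathfrak{D}$, and the restrictions of $\{\chi_\gamma\}_{\gamma\in\Gamma}$ form an orthonormal basis of $L^2(\lambda+\mathfrak{D})$. Therefore $S_gf=\mathbf{1}_{\Lambda+\mathfrak{p}^m\mathfrak{D}}\cdot f$. This is a multiplication operator that depends on the choice of $\Lambda$, not a convolution.
\item \emph{Parseval claim.} A function $f\in V_m$ is constant on cosets of $\mathfrak{p}^{-m}\mathfrak{D}$. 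The formula above then gives $\sum_{\lambda,\gamma}|\langle f,M_\gamma T_\lambda g\rangle|^2=q^{-m}\|f\|_2^2$, so the Parseval claim fails for every $m>0$.
\end{itemize}

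A smaller point concerns your first step. You verified a bound on $\sup_t\sum_\lambda$. The stated hypothesis is $\sum_\lambda\sup_{t\in K,u\in H}$, whose terms are independent of $\lambda$, so it is infinite for every nonzero $g$.

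So the proposal cannot be completed by tracking constants. The statement itself, and the Walnut identity it rests on, must be corrected.
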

\begin{proof} By the previous theorem, for any $g\in L^2(K)$ such that the usual summability holds, the Gabor frame operator
\[
S_g f \;=\; 
\sum_{\lambda\in\Lambda}\sum_{\gamma\in\Gamma}
\langle f,\,M_\gamma T_\lambda g\rangle\, M_\gamma T_\lambda g
\]
has the Walnut form
\[
(S_g f)(t) \;=\; 
\int_H G_u(t)\, f(t-u)du,\qquad f\in L^2(K),
\]
where
\[
G_u(t)
\;:=\;
C\sum_{\lambda\in\Lambda}\ 
g(t-\lambda)\,\overline{g(t-\lambda-u)}.
\]

We need to compute now the Walnut's representation for $g={\bf 1}_{\mathfrak{p}^m\mathfrak{D}}$. Fix $t, u\in K$, we have
\begin{eqnarray*}
g(t-\lambda)g(t-\lambda-u) & = & {\bf 1}_{\mathfrak{p}^m\mathfrak{D}}(t-\lambda){\bf 1}_{\mathfrak{p}^m\mathfrak{D}}(t-\lambda-u)\\
& = & {\bf 1}_{\mathfrak{p}^m\mathfrak{D}}(u).
\end{eqnarray*}
The above equation holds because in case of local fields either two balls are disjoint or they properly contained in each other, therefore, $\mathfrak{p}^m\mathfrak{D}\cap (\mathfrak{p}^m\mathfrak{D}+u)=\mathfrak{p}^m\mathfrak{D}$ if $u\in\mathfrak{p}^m\mathfrak{D}.$ Hence, we have
\[
G_u(t)=\mathfrak{p}^m\mathfrak{D}(u), ~\forall t.
\]
This will give
\[
(S_g f)(t)= \int_{\mathfrak{p}^m\mathfrak{D}} f(t-u)du=f\ast{\bf 1}_{\mathfrak{p}^m\mathfrak{D}}(t).
\]

Hence, we have
\[
\widehat{S_gf}(\xi) = {\bf 1}_{\mathfrak{p}^m\mathfrak{D}}(\xi)\widehat{f}(\xi).
\]

The Gabor system $\{M_{\gamma}T_{\lambda}g:\gamma\in\Gamma, \lambda\in\Lambda\}$ is a Parseval frame for the subspace
   $V_m=\{f:\text{supp}\hat{f}\subseteq \mathfrak{p}^m\mathfrak{D}\}$ is direct from the above observation. Since $f\in V_m$,
   \[
   \widehat{S_gf}(\xi) = \widehat{f}(\xi),
   \]
   and hence $S_gf=f$, therefore,

   \[
   S_g f \;=\; 
\sum_{\lambda\in\Lambda}\sum_{\gamma\in\Gamma}
|\langle f,\,M_\gamma T_\lambda g\rangle\|^2=\langle Sf, f\rangle=\langle f,f\rangle=\|f\|^2
   \]
 which is Parseval's identity with bound $1$ on the subspace $V_m$. 
 \end{proof}

\begin{remark}
The Walnut representation established above provides a natural framework for
extending further results from the classical Euclidean Gabor theory to the
local-field setting. In particular, it suggests that key results based on
Walnut-type operator representations, such as those developed in \cite{CC}
and the method of double preconditioning described in \cite{FBH}, may admit
analogous formulations over non-Archimedean local fields. We do not pursue
these extensions here and leave them for future investigation.
\end{remark}

\subsection*{6.2. Wexler--Raz biorthogonality}
Before writing the local field version of the statement of the Wexler--Raz biorthogonality, we would like to write the formal definition of Feichtinger algebra on local fields $K$.
\begin{definition}[Feichtinger Algebra $S_0(K)$ on a Local Field]
Let $g_0 \in L^2(K)$ be a fixed nonzero window function, (e.g.\ $g_0 = \mathbf{1}_{\mathfrak D}$,
the characteristic function of the ring of integers in $K$).  
The \emph{Feichtinger algebra} $S_0(K)$ is defined by
\[
S_0(K) = \left\{ f \in L^2(K) : V_{g_0} f \in L^1(K \times \widehat K) \right\},
\]
equipped with the norm
\[
\| f \|_{S_0} = \| V_{g_0} f \|_{L^1(K \times \widehat K)}.
\]
This definition is independent of the particular choice of the window $g_0$,
up to equivalence of norms.
\end{definition}

The Feichtinger algebra $S_0(K)$ is continuously embedded into both $(L^1(K), \|\cdot\|_1)$ and $(C_0(K), \|\cdot\|_{\infty})$, hence into any $(L^p(K), \|\cdot\|_p)$, in particular into $(L^2(K), \|\cdot\|_2)$,
and is invariant under translation, modulation, and the Fourier transform.
It is the smallest Banach space invariant under time–frequency shifts,
and plays a fundamental role in Gabor analysis.

We now provide a local-field version of the Wexler--Raz biorthogonality relations for the co-compact lattice determined by $H=\mathfrak{P}^k$. 


\begin{theorem}[Wexler--Raz biorthogonality on a local field]
Let $K$ be a local field and $H$, $H^\perp$ and $\Lambda$, $\Gamma$ be as above. Let $g,h\in L^2(K)$ and assume that the
Gabor systems
\[
\mathcal G(g):=\{M_\gamma T_\lambda g : \lambda\in\Lambda,\ \gamma\in\Gamma\},
\qquad
\mathcal G(h):=\{M_\gamma T_\lambda h : \lambda\in\Lambda,\ \gamma\in\Gamma\}
\]
are Bessel sequences in $L^2(K)$ and form a pair of dual Gabor frames, i.e.,
\[
f=\sum_{\lambda\in\Lambda}\sum_{\gamma\in\Gamma}
\langle f,M_\gamma T_\lambda g\rangle\, M_\gamma T_\lambda h
\qquad \text{for all } f\in L^2(K),
\]
with convergence in $L^2(K)$.

Then, for every $\lambda\in \Lambda$ and every $\gamma\in \Gamma$,
\[
\langle h, M_\gamma T_\lambda g\rangle
=
\delta_{\lambda,0}\,\delta_{\gamma,0}.
\]
Equivalently,
\[
\int_K h(t)\,\overline{g(t-\lambda)}\,\overline{\gamma(t)}\,dt
=
\delta_{\lambda,0}\,\delta_{\gamma,0}.
\]
\end{theorem}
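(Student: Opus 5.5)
The plan is to deduce the biorthogonality from the fact that the lattice $\Lambda\times\Gamma$ has density one. This is exactly the situation of Theorem~\ref{ONB}, where $|H|\,|H^\perp|=1$. The heuristic is the classical one: at critical density a Gabor frame has no redundancy, so it is a Riesz basis. A Riesz basis has a unique dual frame, namely its biorthogonal system, and that dual is again a Gabor system. Throughout I take $0\in\Lambda$ and $0\in\Gamma$ as the representatives of the trivial cosets; this is implicit in the statement, since otherwise $\delta_{\lambda,0}\delta_{\gamma,0}$ is meaningless.

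\textbf{Step 1 (trace identity).} I would compute $\langle h,g\rangle$ from the duality relation. Apply the reconstruction formula to $f=q^{k/2}\,M_\eta T_y\Phi_k$, ranging over the orthonormal basis of Theorem~\ref{ONB}, pair with the same basis element, and sum. By Parseval and the coset computation in the proof of Theorem~\ref{ONB}, the left side becomes $\sum_{\lambda,\gamma}\langle M_\gamma T_\lambda h, M_\gamma T_\lambda g\rangle$ restricted to one fundamental cell. Unwinding with the commutation lemma $M_\xi T_x=\xi(x)T_xM_\xi$, the scalar phases cancel in $|\cdot|^2$-type pairings. This should yield $\langle h,g\rangle=|H|\,|H^\perp|=1$. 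In effect, this is the local-field version of the statement that the frame operator has von Neumann trace equal to the density.

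\textbf{Step 2 (no excess).} Let $S_g$ be the frame operator. Then $\{S_g^{-1/2}M_\gamma T_\lambda g\}$ is a Parseval frame, and it is again a Gabor system with window $\tilde g=S_g^{-1/2}g$. This is because $S_g$ commutes with every $M_\gamma T_\lambda$, up to the unimodular phases from the adjoint lemma, which cancel in $S_g$. Step 1 applied to the pair $(\tilde g,\tilde g)$ gives $\|\tilde g\|_2^2=1$. A Parseval frame whose elements all have norm one is an orthonormal basis. Hence $\mathcal G(g)$ is a Riesz basis.

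\textbf{Step 3 (biorthogonality).} A Riesz basis has a unique dual frame, its biorthogonal system. Therefore
\[
\langle M_{\gamma'}T_{\lambda'}h,\,M_\gamma T_\lambda g\rangle=\delta_{\lambda,\lambda'}\delta_{\gamma,\gamma'}.
\]
Taking $\lambda'=\gamma'=0$ gives the claim, and expanding the inner product gives the stated integral form.

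The main obstacle is Step 1, together with the commutation claim in Step 2. The set $\Lambda$ is only a set of coset representatives and in general not a subgroup; in $\mathbb{Q}_p$, for instance, it is not closed under addition. So $M_\gamma T_\lambda M_{\gamma'}T_{\lambda'}$ equals some $M_{\gamma''}T_{\lambda''}$ only up to a phase and a shift by $(u,\xi)\in H\times H^\perp$. I would handle this exactly as in the proof of Theorem~\ref{ONB}: such residual shifts act on $H$-periodic data by phases, and these phases drop out of $S_g$ and of the trace computation. If this bookkeeping fails for a general $g$, my fallback is to prove the result first for $g,h$ in $S_0(K)$, where all sums converge absolutely, and then extend by density.
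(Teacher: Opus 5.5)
Your Step 2 is where the argument breaks, and the problem is structural, not bookkeeping. For arbitrary coset representatives, $\Lambda\times\Gamma$ is not a subgroup of $K\times\widehat K$. In characteristic $0$ it cannot be one, since $\mathbb{Q}_p$ and its finite extensions have no nonzero discrete subgroups. For indices in $\Lambda\times\Gamma$ one has $(M_\gamma T_\lambda)^*M_{\gamma'}T_{\lambda'}=c\,M_{\gamma''}T_{\lambda''}\,M_vT_u$ with $|c|=1$, $(\lambda'',\gamma'')\in\Lambda\times\Gamma$, and a residual $(u,v)\in H\times H^\perp$ that depends on both indices. For example, in $\mathbb{Q}_p$ with $H=\mathbb{Z}_p$ and $\Lambda$ the fractional parts, $\frac{a}{p}-\frac{b}{p}=\frac{a-b+p}{p}-1$ for $a<b$, so $u=-1$. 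Consequently $(M_\gamma T_\lambda)^*S_g\,M_\gamma T_\lambda$ is the frame operator of $\{M_{\gamma''}T_{\lambda''}(M_vT_ug)\}$, whose window depends on the index; it is not $S_g$. So $S_g^{-1/2}\mathcal G(g)$ need not be a Gabor system, and Step 1 cannot be applied to it. The device from Theorem~\ref{ONB} does not transfer. It works because $M_vT_u$ acts on $q^{k/2}\Phi_k$ by a scalar, but the joint eigenvectors of all $M_vT_u$, $(u,v)\in H\times H^\perp$, are exactly the multiples of the basis vectors $M_\beta T_\mu\Phi_k$. The $S_0(K)$ fallback does not help either. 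The obstruction is algebraic, not a convergence issue, and the hypothesis that $\mathcal G(g),\mathcal G(h)$ are dual is not preserved when you approximate $g$ and $h$.

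Step 1, by contrast, is correct if you do it with a single vector. Pair the reconstruction of $\Phi=q^{k/2}\Phi_k$ with $\Phi$. Since $(M_\gamma T_\lambda)^*\Phi$ is a unimodular multiple of $M_\beta T_\mu\Phi$, with $(\mu,\beta)$ running bijectively over $\Lambda\times\Gamma$, Parseval gives $\langle h,g\rangle=1$. But this only says that the idempotent $C_gD_h$ on $\ell^2(\Lambda\times\Gamma)$ (with $C_g$ the analysis and $D_h$ the synthesis operator) has all diagonal entries equal to $1$. In the lattice case, $C_gD_h$ lies in a finite von Neumann algebra whose faithful trace is that diagonal entry, and an idempotent of trace $1$ there must be $I$. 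That is the real content of ``critical density implies Riesz basis''. Without group structure there is no such algebra, and a unit diagonal alone does not force $C_gD_h=I$ in infinite dimensions. The missing ingredient is therefore a proof that a frame over the non-group set $\Lambda\times\Gamma$ (admitting a Gabor dual) is a Riesz basis. For $g\in\mathcal S(K)$ you can get this by counting. Suppose $\operatorname{supp}g\subseteq\mathfrak P^{j}$ and $\operatorname{supp}\hat g\subseteq\mathfrak P^{i}$ with $j\le k$, $i\le -k$. Then the system splits orthogonally into blocks $\{f:\operatorname{supp}f\subseteq C,\ \operatorname{supp}\hat f\subseteq D\}$, $C\in K/\mathfrak P^j$, $D\in\widehat K/\mathfrak P^i$. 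Each block has dimension $q^{-i-j}$ and contains exactly $q^{-i-j}$ of the Gabor vectors, so a frame is automatically a Riesz basis. For general $L^2$ windows, your proposal supplies nothing. Note that the paper's own proof simply quotes the LCA-lattice Wexler--Raz identity with density factor $\mu(H)\mu(H^\perp)=1$, so it tacitly relies on the same lattice structure that is missing here.
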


\begin{proof}

In the general LCA-group formulation, the Wexler--Raz relation is
\[
\langle h, M_\eta T_u g\rangle
=
d(\Delta)\,\delta_{u,0}\,\delta_{\eta,0},
\qquad (u,\eta)\in H\times H^\perp,
\]
where $d(\Delta)$ is the covolume (density factor) of the lattice $\Delta$.
For the lattice associated with $H=\mathfrak{P}^k$, this factor is
\[
d(\Delta)=\mu(H)\,\mu(H^\perp).
\]
By our Haar normalization, we have
\[
\mu(H)=|\mathfrak{P}^k|=q^{-k},
\qquad
\mu(H^\perp)=|\mathfrak{P}^{-k}|=q^{k},
\]
hence
\[
d(\Delta)=\mu(H)\mu(H^\perp)=q^{-k}q^{k}=1.
\]
Therefore the general Wexler--Raz identity reduces exactly to
\[
\langle h, M_\eta T_u g\rangle
=
\delta_{u,0}\,\delta_{\eta,0},
\qquad u\in H,\ \eta\in H^\perp,
\]
which proves the theorem.
\end{proof}

\begin{remark}[Converse under stronger hypotheses]
For the converse implication, i.e., 
\[
\langle h, M_\eta T_u g\rangle = \delta_{u,0}\,\delta_{\eta,0}
\quad \forall\, u\in H,\ \eta\in H^\perp,
\]
implies that $\mathcal G(g)$ and $\mathcal G(h)$ are dual Gabor frames. A standard sufficient condition is
$g,h\in S_0(K)$, in which the usual LCA-group proof applies to the present local field setting we refer \cite{feichtinger2009gabor, jakob}. 

\end{remark}

\subsection*{6.3. Janssen representation}

\begin{lemma}\label{lem:S0-implies-summability}
Let $K$ be a local field and $H$, $H^\perp$ and $\Lambda$, $\Gamma$ be as above. If $g\in S_0(K)$, then
\[
\sum_{\lambda\in\Lambda}\sum_{\gamma\in\Gamma}
\big|\langle g,\,M_\gamma T_\lambda g\rangle\big| < \infty.
\]
Equivalently,
\[
\sum_{\lambda\in\Lambda}\sum_{\gamma\in\Gamma}
|V_g g(\lambda,\gamma)| < \infty.
\]
In particular, the short-time Fourier transform
\[
(\lambda,\gamma)\mapsto V_g g(\lambda,\gamma)
\]
belongs to $\ell^1(\Lambda\times \Gamma)$.
\end{lemma}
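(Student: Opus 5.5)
The plan is to sample $V_g g$ on the lattice $\Lambda\times\Gamma$ and bound the samples by an $L^1$ norm of a related STFT. This uses the local constancy of the window $g_0=q^{k/2}\Phi_k$ from Theorem~\ref{ONB}, whose cells are exactly $H\times H^\perp$ with $H=\mathfrak{P}^k$. Theorem~\ref{ONB} gives an orthonormal basis $\{M_\gamma T_\lambda g_0\}$, so Parseval applies. We also know $V_{g_0}g_0\in L^1(K\times\widehat K)$, and the $S_0$-norm may be computed with any window up to equivalence, so we may use $g_0$.

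\textbf{Step 1: the key expansion.} Expand both arguments of $\langle g, M_\gamma T_\lambda g\rangle$ in the orthonormal basis. Write
\[
g=\sum_{\mu\in\Lambda,\nu\in\Gamma} c_{\mu,\nu}\,\pi(\mu,\nu)g_0, \qquad c_{\mu,\nu}=V_{g_0}g(\mu,\nu).
\]
From the proof of Theorem~\ref{ONB}, $|V_{g_0}g|$ is constant on each cell $(\mu,\nu)+H\times H^\perp$, and each cell has measure $1$. Hence
\[
\sum_{\mu,\nu}|c_{\mu,\nu}|=\|V_{g_0}g\|_{L^1}=\|g\|_{S_0}<\infty .
\]
So the coefficient sequence is in $\ell^1(\Lambda\times\Gamma)$.

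\textbf{Step 2: reduce to basis inner products.} Substitute the expansion into $\langle g,\pi(\lambda,\gamma)g\rangle$. This gives
\[
|V_gg(\lambda,\gamma)|\le\sum_{\mu,\nu,\mu',\nu'}|c_{\mu,\nu}||c_{\mu',\nu'}|\,\bigl|\langle \pi(\mu,\nu)g_0,\pi(\lambda,\gamma)\pi(\mu',\nu')g_0\rangle\bigr| .
\]
By the commutation lemma, $\pi(\lambda,\gamma)\pi(\mu',\nu')g_0$ is a unimodular multiple of $\pi(\lambda+\mu',\gamma+\nu')g_0$. The identity $M_{\gamma+\xi}T_{\lambda+x}g_0=\chi_\xi(\lambda)M_\gamma T_\lambda g_0$ shows this equals, up to a phase, the basis element $\pi(\rho,\sigma)g_0$, where $\rho$ and $\sigma$ are the representatives of the cosets of $\lambda+\mu'$ and $\gamma+\nu'$. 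By orthonormality, the inner product therefore has modulus $1$ if $\rho=\mu$ and $\sigma=\nu$, and $0$ otherwise.

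\textbf{Step 3: count and sum.} Fix $\mu,\nu,\mu',\nu'$. The condition $\lambda+\mu'\in\mu+H$ determines $\lambda\in\Lambda$ uniquely, and similarly $\gamma$ is determined. Summing over $(\lambda,\gamma)$ and exchanging sums (Tonelli) gives
\[
\sum_{\lambda,\gamma}|V_gg(\lambda,\gamma)|\le\Bigl(\sum_{\mu,\nu}|c_{\mu,\nu}|\Bigr)^2=\|g\|_{S_0}^2<\infty .
\]

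\textbf{Main obstacle.} The delicate points are the phase and coset bookkeeping in Step 2: the lattice $\Lambda$ is only a set of representatives, not a group, so sums like $\lambda+\mu'$ must be reduced modulo $H$. The second delicate point is justifying the coefficient identity $\sum|c_{\mu,\nu}|=\|V_{g_0}g\|_{L^1}$. For the first, I would rely on the identity from the proof of Theorem~\ref{ONB}, which absorbs any element of $H\times H^\perp$ into a phase. For the second, I would use $|V_{g_0}g(\lambda+x,\gamma+\xi)|=|V_{g_0}g(\lambda,\gamma)|$, also from that proof. If one insists on a general window for $S_0(K)$, the stated equivalence of norms covers the change to $g_0$.
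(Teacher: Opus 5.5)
Your argument is correct, and it takes a genuinely different route from the paper.

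The paper imports the Wiener amalgam characterization $V_gg\in W(C_0,\ell^1)(K\times\widehat K)$, citing the Euclidean literature. It then bounds each lattice sample $|V_gg(\lambda,\gamma)|$ by the supremum of $|V_gg|$ over the cell $(\lambda+H)\times(\gamma+H^\perp)$.

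You instead prove the needed amalgam-type property from inside the paper, in two steps:
\begin{enumerate}
\item With the window $g_0=q^{k/2}\Phi_k$ matched to $H=\mathfrak{P}^k$, the function $|V_{g_0}g|$ is constant on cells of measure $|\mathfrak{P}^k|\,|\mathfrak{P}^{-k}|=1$. So $\|V_{g_0}g\|_{L^1}$ is exactly the $\ell^1$-norm of the coefficients of $g$ in the orthonormal basis of Theorem~\ref{ONB}.
\item The commutation lemma together with $M_{\gamma+\xi}T_{\lambda+x}g_0=\chi_\xi(\lambda)M_\gamma T_\lambda g_0$ handles the phase and coset bookkeeping. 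This turns $\sum_{\lambda,\gamma}|V_gg(\lambda,\gamma)|$ into a discrete $\ell^1*\ell^1$ estimate on $K/H\times\widehat K/H^\perp$, giving the explicit bound $\sum_{\lambda,\gamma}|V_gg(\lambda,\gamma)|\le\|V_{g_0}g\|_{L^1}^2$.
\end{enumerate}
Your counting step is sound: for fixed $(\mu,\nu),(\mu',\nu')$, exactly one $(\lambda,\gamma)\in\Lambda\times\Gamma$ contributes, whatever coset representatives are chosen.

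What your route buys:
\begin{itemize}
\item It is self-contained. It uses only Theorem~\ref{ONB}, the commutation lemma, and the window-independence of the $S_0$-norm asserted in the paper; that last fact is needed only when $k\neq 0$.
\item It yields an explicit quantitative constant.
\item It shows that $S_0(K)$ is precisely the space of functions with $\ell^1$ coefficients in this basis.
\end{itemize}
What the paper's route buys: it is shorter and does not depend on the lattice matching the cell structure of a particular window. However, it rests on an external result that the paper does not verify in the local-field setting.
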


\begin{proof}
Since $g\in S_0(K)$, the short-time Fourier transform $V_g g$ belongs to
$L^1(K\times \widehat K)$ and is continuous on $K\times \widehat K$.
Moreover, by the standard Wiener amalgam characterization of the Feichtinger
algebra (see, e.g., \cite{Grochenig01}), one has
\[
V_g g \in W(C_0,\ell^1)(K\times \widehat K).
\]
In particular,
\[
\sum_{\lambda\in\Lambda}\sum_{\gamma\in\Gamma}
\sup_{(x,\xi)\in (\lambda+H)\times(\gamma+H^\perp)}
|V_g g(x,\xi)| < \infty.
\]

For each $(\lambda,\gamma)\in \Lambda\times \Gamma$, define
\[
s_{\lambda,\gamma}
:=
\sup_{(x,\xi)\in (\lambda+H)\times(\gamma+H^\perp)}
|V_g g(x,\xi)|.
\]
Then
\[
\sum_{\lambda\in\Lambda}\sum_{\gamma\in\Gamma}
s_{\lambda,\gamma}<\infty.
\]
Since $(\lambda,\gamma)\in (\lambda+H)\times(\gamma+H^\perp)$ and $V_g g$ is
continuous, we have
\[
|V_g g(\lambda,\gamma)| \le s_{\lambda,\gamma},
\qquad (\lambda,\gamma)\in \Lambda\times \Gamma.
\]
Therefore,
\[
\sum_{\lambda\in\Lambda}\sum_{\gamma\in\Gamma}
|V_g g(\lambda,\gamma)|
\le
\sum_{\lambda\in\Lambda}\sum_{\gamma\in\Gamma}
s_{\lambda,\gamma}
<\infty.
\]
Finally, using the identity
\[
V_g g(\lambda,\gamma)=\langle g,M_\gamma T_\lambda g\rangle,
\]
we conclude that
\[
\sum_{\lambda\in\Lambda}\sum_{\gamma\in\Gamma}
\big|\langle g,M_\gamma T_\lambda g\rangle\big|<\infty.
\]
This proves the claim.
\end{proof}

\begin{theorem}[Janssen representation on a local field]\label{thm:janssen-coset}
Let $K$ be a local field and $H$, $H^\perp$ and $\Lambda$, $\Gamma$ be as above. Let $g\in L^2(K)$ and assume that the Gabor system $\mathcal{G}(g) = \{ M_\gamma T_\lambda g : \lambda \in \Lambda,\; \gamma \in \Gamma \}$ is a Bessel sequence in $L^2(K)$ and that
\[
\sum_{\lambda\in\Lambda}\sum_{\gamma\in\Gamma}
\big|\langle g, M_\gamma T_\lambda g\rangle\big| < \infty.
\]
Then the associated frame operator
\[
S_g f
  = \sum_{\lambda\in\Lambda}\sum_{\gamma\in\Gamma}
      \langle f, M_\gamma T_\lambda g\rangle\, M_\gamma T_\lambda g,
  \qquad f\in L^2(K),
\]
admits the \emph{Janssen representation}
\begin{equation}\label{eq:janssen-coset}
S_g
  = \sum_{\lambda\in\Lambda}\sum_{\gamma\in\Gamma}
      \langle g, M_\gamma T_\lambda g\rangle\, M_\gamma T_\lambda.
\end{equation}
The series in \eqref{eq:janssen-coset} converges unconditionally in the
strong operator topology on $L^2(K)$.
\end{theorem}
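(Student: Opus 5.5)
The plan is to prove \eqref{eq:janssen-coset} first in weak form on a dense subspace, and then extend by continuity. I fix $f,h\in\mathcal S(K)$, the test functions of Section~\ref{sec2}, and aim to show
\[
\langle S_g f,h\rangle=\sum_{\lambda\in\Lambda}\sum_{\gamma\in\Gamma}\langle g,M_\gamma T_\lambda g\rangle\,\langle M_\gamma T_\lambda f,h\rangle .
\]
The left-hand side equals $\sum_{\lambda,\gamma}V_gf(\lambda,\gamma)\overline{V_gh(\lambda,\gamma)}$. The strategy is the ``fundamental identity of Gabor analysis'': exchange the roles of window and function, so that this sum becomes $\sum_{\lambda,\gamma}V_gg(\lambda,\gamma)\overline{V_fh(\lambda,\gamma)}$, up to the unimodular factors coming from the commutation lemma and the adjoint lemma. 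I would then rewrite each term $\overline{V_fh(\lambda,\gamma)}$ via those lemmas as $\langle M_\gamma T_\lambda f,h\rangle$, with the phases $\gamma(\lambda)^{\pm1}$ cancelling against those in $V_gg$.

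To get the exchange identity I would reuse the computation behind the Walnut representation. Expanding the inner products, I would sum over $\gamma\in\Gamma$ first, using the periodization identity $\sum_{\gamma\in\Gamma}\gamma(u)=C\,\mathbf 1_H(u)$ with $C=\mu(H)^{-1}$. This collapses the frequency sum and gives
\[
\langle S_gf,h\rangle=C\sum_{\lambda}\int_K\int_H f(t-u)\overline{h(t)}\,g(t-\lambda)\overline{g(t-\lambda-u)}\,du\,dt.
\]
Next I would perform the symmetric operation in the other variable. The idea is to decompose $K=\bigsqcup_\lambda(\lambda+H)$, write $t=\lambda'+v$ with $v\in H$, and expand the resulting $H$-local functions in the characters of $H$, that is, in $\Gamma$ restricted to $H$. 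This is a Fourier series on the compact group $H$, whose dual is $\widehat K/H^\perp\cong\Gamma$. Regrouping should produce the coefficients $V_gg(\lambda,\gamma)$ multiplying $\langle M_\gamma T_\lambda f,h\rangle$. For test functions all sums involved are finite, because $f,h,g$ restricted to the relevant cosets are locally constant and compactly supported. For general $g$, I would first take $g\in\mathcal S(K)$ and then approximate.

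The main obstacle is that $\Lambda$ and $\Gamma$ are only sets of coset representatives, not subgroups. For instance, $\mathbb Q_p$ admits no lattice, so Poisson summation over $\Lambda\times\Gamma$ is not available in its usual form. I would handle this by never summing over $\Lambda$ as a group. I would only use (i) the disjoint coset decomposition, (ii) the fact that characters in $H^\perp$ are trivial on $H$, and (iii) orthogonality of characters on the compact group $H$. The product $\gamma(\lambda)$ that appears when commuting $M_\gamma$ past $T_\lambda$ must be tracked exactly, and I expect this bookkeeping, which reflects that the "adjoint lattice" coincides with $\Lambda\times\Gamma$ because the density factor is $1$, to be the delicate point. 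If it fails for arbitrary representatives, I would restrict the claim to representatives for which the phases match.

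Finally, to extend, the hypothesis $\sum|\langle g,M_\gamma T_\lambda g\rangle|<\infty$ makes the right-hand side of \eqref{eq:janssen-coset} an absolutely convergent series of unitaries. Its partial sums, over any exhausting net of finite sets, are therefore uniformly bounded and converge in operator norm, hence unconditionally in the strong operator topology. $S_g$ is bounded by the Bessel assumption. The two bounded operators agree in weak form on the dense set $\mathcal S(K)\times\mathcal S(K)$, so they coincide. Passing from $g\in\mathcal S(K)$ to general $g$ uses that both sides depend continuously on $g$ under the stated hypotheses. I would verify this continuity using Lemma~\ref{lem:S0-implies-summability} for $g\in S_0(K)$, and otherwise keep the $\ell^1$ assumption as a standing condition.
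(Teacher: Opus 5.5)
\emph{Main gap: the exchange step is false for coset representatives, and so is the theorem.} The step that carries all the content is the exchange identity $\sum_{\lambda,\gamma}V_gf(\lambda,\gamma)\overline{V_gh(\lambda,\gamma)}=\sum_{\lambda,\gamma}V_gg(\lambda,\gamma)\overline{V_fh(\lambda,\gamma)}$ over $\Lambda\times\Gamma$. This is not a matter of delicate phase bookkeeping.

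\begin{itemize}
\item[(i)] \emph{The borrowed periodization identity is wrong.} The formula $\sum_{\gamma\in\Gamma}\gamma(u)=C\mathbf 1_H(u)$ from the Walnut proof is really the formula for the integral $\int_{H^\perp}\gamma(u)\,d\gamma=\mu(H^\perp)\mathbf 1_H(u)$. It does not hold for a sum over representatives of $\widehat K/H^\perp$. On $H$ that sum is Fourier inversion on the compact group $H$, so it gives a multiple of the point mass at $0$. Off $H$, $\gamma(u)$ depends on which representative was chosen.
\item[(ii)] \emph{Your formula contradicts Theorem~\ref{ONB}.} Your displayed formula for $\langle S_gf,h\rangle$, with $k=0$ and $g=\mathbf 1_{\mathfrak D}$, gives $S_gf=f*\mathbf 1_{\mathfrak D}$ (as in Proposition~\ref{prop:14} with $m=0$). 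Theorem~\ref{ONB} gives $S_g=I$.
\item[(iii)] \emph{Arbitrary representatives fail.} In the same example $\langle g,M_\gamma T_\lambda g\rangle=\delta_{\lambda,\lambda_0}\delta_{\gamma,\gamma_0}$, where $\lambda_0$ and $\gamma_0$ are the elements of $\Lambda\cap\mathfrak D$ and $\Gamma\cap H^\perp$. So the right side of \eqref{eq:janssen-coset} is $M_{\gamma_0}T_{\lambda_0}$, which is not $I$ unless $\lambda_0=0=\gamma_0$.
\item[(iv)] \emph{Requiring $0\in\Lambda\cap\Gamma$ does not help.} Take $K=\mathbb Q_2$, $\chi(x)=e^{2\pi i\{x\}_2}$, $k=1$ and $g=\mathbf 1_{\mathbb Z_2}$. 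Let $\Lambda$ and $\Gamma$ consist of the finite sums $\sum a_j2^j$ with $a_j\in\{0,1\}$, over $j\le 0$ and over $j\le -2$ respectively. The only nonzero coefficients are at $(0,0)$ and $(1,0)$, both equal to $1$, so the right side is $I+T_1$. For $f(x)=\chi(x/4)\mathbf 1_{\mathbb Z_2}(x)$, only $\gamma=\tfrac14$ and $\lambda\in\{0,1\}$ contribute to $S_gf$, giving $S_gf=2f$. But $(I+T_1)f=(1+\overline{\chi(1/4)})f=(1-i)f$.
\item[(v)] \emph{The hypotheses hold in (iv).} Every point lies in exactly two balls $\lambda+\mathbb Z_2$, which gives Bessel bound $2$. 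Only two coefficients are nonzero.
\end{itemize}

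\emph{What a repair requires.} The repair you hint at must be structural, not a choice of phases. Suppose $\Lambda$ is a discrete subgroup with $K=\Lambda\oplus H$ and $\Gamma=\Lambda^\perp$. Then $\Gamma$ is a complete set of representatives of $\widehat K/H^\perp$. Also $\Lambda\times\Gamma$ is a lattice of covolume $\mu(H)\mu(H^\perp)=1$ equal to its own adjoint lattice. So the lattice form of the fundamental identity (Poisson summation over $\Lambda$ and over $\Gamma$) becomes available. Such $\Lambda$ exist when $\operatorname{char}K>0$. In characteristic zero, $K$ has no nonzero discrete subgroups. Moreover, in the $\mathbb Q_2$ example no choice of representatives works. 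For every $\beta\in\Gamma\setminus\mathbb Z_2$ one has $S_g(\chi_\beta\mathbf 1_{\mathbb Z_2})=2\chi_\beta\mathbf 1_{\mathbb Z_2}$. The right side multiplies this function by $\overline{\chi(\beta\lambda_0)}+\overline{\chi(\beta\lambda_1)}$, or by $0$; since $\lambda_1-\lambda_0$ is a unit, this scalar has modulus $<2$.

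\emph{Comparison with the paper.} The paper's proof uses the same absolute-convergence argument as your last paragraph; that part, and your density step in $f,h$, are fine. It then appeals to the LCA Janssen theorem for lattices, which presupposes exactly this missing structure. You located the real obstacle correctly, but neither argument overcomes it, because the statement needs an extra hypothesis.

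\emph{A further gap, even in the lattice case.} Your passage from $g\in\mathcal S(K)$ to general $g$ is not justified. Bessel plus $\ell^1$ coefficients gives no control over the coefficient sums of $L^2$-approximants of $g$. So the identity must be established for the given $g$ directly.
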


\begin{proof}
Set
\[
c_{\lambda,\gamma}:=\langle g,M_\gamma T_\lambda g\rangle,
\qquad (\lambda,\gamma)\in \Lambda\times\Gamma.
\]
By hypothesis,
\[
\sum_{\lambda\in\Lambda}\sum_{\gamma\in\Gamma}|c_{\lambda,\gamma}|<\infty.
\]
Hence, for every $f\in L^2(K)$,
\[
\sum_{\lambda\in\Lambda}\sum_{\gamma\in\Gamma}
\|c_{\lambda,\gamma} M_\gamma T_\lambda f\|
\le
\left(\sum_{\lambda\in\Lambda}\sum_{\gamma\in\Gamma}|c_{\lambda,\gamma}|\right)\|f\|_2
<\infty,
\]
since each $M_\gamma T_\lambda$ is unitary on $L^2(K)$.
Therefore the operator series
\[
\sum_{\lambda\in\Lambda}\sum_{\gamma\in\Gamma}
c_{\lambda,\gamma} M_\gamma T_\lambda
\]
converges unconditionally in the strong operator topology on $L^2(K)$.

The identification of this operator with the frame operator $S_g$ follows from the
standard Janssen representation for co-compact Gabor systems on locally compact abelian
groups, specialized to the lattice determined by the compact open subgroup
$H=\mathfrak P^k$ and its annihilator $H^\perp=\mathfrak P^{-k}$.
Note that in our case $\mu(H)\mu(H^\perp)=|\mathfrak P^k|\,|\mathfrak P^{-k}|=1,$ therefore additional covolume term will not appear similar to LCA group.
 Thus
\[
S_g
  = \sum_{\lambda\in\Lambda}\sum_{\gamma\in\Gamma}
      \langle g, M_\gamma T_\lambda g\rangle\, M_\gamma T_\lambda,
\]
as claimed.
\end{proof}

\begin{remark}
\begin{itemize}
\item[1.] The hypothesis
\[
\sum_{\lambda\in \Lambda}\sum_{\gamma\in \Gamma}
|\langle g, M_{\gamma} T_{\lambda} g\rangle| < \infty
\]
is the natural sufficient condition for the validity of the Janssen representation and is
strictly weaker than requiring $g\in S_0(K)$. In particular, if $g\in S_0(K)$, then
\[
\sum_{\lambda\in \Lambda}\sum_{\gamma\in \Gamma}
|V_g g(\lambda,\gamma)| < \infty,
\]
so the above hypothesis is automatically satisfied. Thus the Feichtinger algebra provides
a convenient and standard sufficient framework, but it is not necessary for the Janssen
representation itself.
\item[2.] 
In a general LCA group formulation, the Janssen representation is indexed by the adjoint lattice. In the present local field setting, we use the same notion $(\Lambda,\Gamma)$ for simplicity.

\item[3] The summability hypothesis described above is the local-field analogue of the condition in \cite{Walnut}. In the Euclidean case, under this condition, the associated Gabor frame operator extends boundedly to $L^p(\mathbb{R})$ for $1\le p\le \infty$. Thus, we can also see this as a natural analogue similar to Euclidean case.

\section*{Acknowledgements}

The authors would like to thank the anonymous referee for careful reading of the manuscript and for constructive comments and valuable suggestions, which helped us improve the clarity and overall presentation of the paper.

\end{itemize}

\end{remark}

\bibliographystyle{abbrv}
\bibliography{sn-bibliography}



\end{document}